\DeclareMathOperator{\cHom}{\mathscr{H}\text{\kern -3pt {\calligra\large om}}\,}
\def\barL{\overline{L}}
\def\barM{\overline{M}}
\def\Stab{{\rm{Stab}}}
\def\SR{{\rm{SR}}}
\def\uz{\underline{z}}
\def\la{\langle}
\def\ra{\rangle}
\def\aff{{\rm{aff}}}
\def\Waff{W_{\rm\aff}}
\def\Saff{S_{\rm\aff}}
\def\R{\mathbb{R}}
\def\C{\mathbb{C}}
\def\bfR{\mathbf{R}}
\def\q{{ \mathbf q}}
\theoremstyle{plain}
\def\k{{{k}}}
\def\pt{{\rm{pt}}}
\def\N{\mathbb{N}}
\def\Z{\mathbb{Z}}
\def\cx{{\check x}}
\def\cX{{\check X}}
\def\cY{{\check Y}}
\def\cQ{{\check Q}}
\def\cPhi{{\check \Phi}}
\def\cPi{{\check \Pi}}
\def\Q{\mathbb{Q}}
\def\cG{{\check G}}
\def\cT{{\check T}}
\def\cB{{\check B}}
\def\cmfb{{\check {\mathfrak b}}}
\def\cmfg{{\check {\mathfrak g}}}
\def\gr{{\rm{gr}}}
\newcommand{\id}{\operatorname{id}}
\newcommand{\Rees}{\textnormal{Rees}}
\def\St{{\rm{St}}}
\def\cl{{\check{\lambda}}}
\def\l{\lambda}
\def\cmu{{\check{\mu}}}
\def\cla{{\check{\lambda}}}
\def\cLa{{\check{\Lambda}}}
\def\cA{{\mathcal{A}}}
\def\cC{{\mathcal{C}}}
\def\cH{{\mathcal{H}}}
\def\cO{{\mathcal{O}}}
\def\cZ{{\mathcal{Z}}}
\def\comega{{\check{\omega}}}
\def\calpha{\check{\alpha}}
\def\r{{R}}
\newtheorem{theorem}{Theorem}[section]
\newtheorem{corollary}[theorem]{Corollary}
\newtheorem{lemma}[theorem]{Lemma}
\newtheorem{proposition}[theorem]{Proposition}
\newtheorem*{theorem*}{Theorem}
\newtheorem*{proposition*}{Proposition}
\theoremstyle{remark}
\newtheorem{remark}[theorem]{Remark}
\theoremstyle{definition}
\def\@tocline#1#2#3#4#5#6#7{\relax
  \ifnum #1>\c@tocdepth 
  \else
    \par \addpenalty\@secpenalty\addvspace{#2}%
    \begingroup \hyphenpenalty\@M
    \@ifempty{#4}{%
      \@tempdima\csname r@tocindent\number#1\endcsname\relax
    }{%
      \@tempdima#4\relax
    }%
    \parindent\z@ \leftskip#3\relax \advance\leftskip\@tempdima\relax
    \rightskip\@pnumwidth plus4em \parfillskip-\@pnumwidth
    #5\leavevmode\hskip-\@tempdima
      \ifcase #1
       \or\or \hskip 1em \or \hskip 2em \else \hskip 3em \fi%
      #6\nobreak\relax
    \dotfill\hbox to\@pnumwidth{\@tocpagenum{#7}}\par
    \nobreak
    \endgroup
  \fi}
\title{On the center of the generic affine Hecke algebra}
\author{Sabin Cautis, Rachel Ollivier}
\address{University of British Columbia,  1984 Mathematics Road, Vancouver, BC V6T 1Z2, Canada}
\email{cautis@math.ubc.ca}
\email{ollivier@math.ubc.ca}
\begin{document}

\begin{abstract}
We identify the center of the generic affine Hecke algebra $\cH_\q$ corresponding to some root datum with the semigroup algebra $\C[\q][\cX^+]$ of the dominant chamber of its coweight lattice. This is done by first identifying a maximal commutative subalgebra $\cA_\q \subset \cH_\q$ with the Rees algebra associated to the semigroup algebra of the coweight lattice for the filtration induced by the length function. We explain how this subalgebra $\cA_\q$ can also be identified with (quantum) cohomology of the toric variety given by the fan corresponding to the coroot lattice (a Hessenberg variety). 
\end{abstract}

\maketitle

\setcounter{tocdepth}{1}

\tableofcontents

\section{Introduction}

The generic affine Hecke algebra $\cH_\q$ associated to a root datum $(X,\cX, \Phi, \cPhi)$ interpolates between the usual affine Hecke algebra $\cH_{\q^{\pm 1}}$ and the Hecke algebra $\cH_0$ with parameter $0$ which appears naturally in the mod $p$  representation theory of $p$-adic reductive groups. Denote by $G$ the group associated to the root datum and $\cG$ its Langlands dual. A celebrated result of Kazhdan and Lusztig \cite{KL} (extended by Ginzburg, see \cite{CG}, and Lusztig \cite{Lu2}) states that if $\cG$ has simply connected derived subgroup then the algebra $\cH_{\q^{\pm 1}}$ has a geometric description as the K-theory of the Steinberg stack 
$$\St_{\cG} := \cmfb/(\cB \times \C^\times) \times_{\cmfg/(\cG \times \C^\times)} \cmfb/(\cB \times \C^\times)$$
where $\cT \subset \cB \subset \cG$ is a Borel (and the torus) and $\cmfb \subset \cmfg$ their Lie algebras. The $\C^\times$ acts with weight two on $\cmfg$ and $K(\pt/\C^\times) \cong \C[\q^{\pm}]$ identifies the invertible parameter $\q^{\pm 1} \in \cH_{\q^{\pm 1}}$. We will write $\cG_\gr = \cG \times \C^\times$ and similarly $\cB_\gr, \cT_\gr$.

Their result also identified the center $\cZ_{\q^{\pm 1}} \subset \cH_{\q^{\pm 1}}$ with $K(\pt/\cG_\gr) = \C[\cG_\gr]^{\cG_\gr}$. This in turn can be identified with $K(\pt/\cT_\gr)^{W_0}$ where $W_0$ is the finite Weyl group. Geometrically $K(\pt/\cT_\gr) \cong K(\cmfb/\cB_\gr)$ can be realized as a subalgebra of $\cH_{\q^{\pm 1}}$ by the diagonal embedding 
$$\Delta: \cmfb/\cB_\gr \to \St_{\cG}.$$
Thus, if we denote $\cA_{\q^{\pm 1}} := K(\pt/\cT_\gr) \cong \C[\q^{\pm 1}][\cX]$, then $\cZ_{\q^{\pm 1}} \cong \cA_{\q^{\pm 1}}^{W_0}$. Finally, by \cite[Thm. 6.1]{Stein1}, it follows that $\C[\cX]^{W_0} \cong \C[\cX^+]$ where $\cX^+ \subset \cX$ consists of dominant coweights. In conclusion, if $\cG$ has simply connected derived subgroup, this implies that $\cZ_{\q^{\pm 1}} \cong \C[\q^{\pm 1}][\cX^+]$. 

Similarly, it was shown in \cite{Ollcompa}  using \cite{Vig1}, that the center $\cZ_0$ of $\cH_0$ is isomorphic to $\cA_0^{W_0} \cong \C[\cX^+]$ for a commutative subalgebra $\cA_0 \subset \cZ_0$. These descriptions of $\cZ_{\q^{\pm}}$ and $\cZ_0$ extend naturally to $\cZ_\q \subset \cH_\q$ as follows. By work of Vign\'eras \cite{VigI,VigII} there exists a commutative subalgebra $\cA_\q \subset \cH_\q$ whose base change recovers $\cA_{\q^{\pm 1}} \subset \cH_{\q^{\pm 1}}$ and $\cA_0 \subset \cH_0$. Moreover, $\cZ_\q \cong \cA_\q^{W_0}$. In this paper we identify $\cA_\q$ with the semigroup algebra $\C[\cC(\cX)]$ where $\cC(\cX) \subset \cX \oplus \Z$ is a cone determined by the length function $\ell$ on $\cX$. Equivalently, $\C[\cC(\cX)]$ is the Rees algebra $\Rees(\C[\cX])$ with respect to the filtration induced by $\ell$. Under this identification $\q$ corresponds to the Rees parameter. Our main result (Theorem \ref{thm:main}) implies that 
\begin{equation}\label{eq:main}
\C[\cC(\cX)]^{W_0} \cong \C[\q][\cX^+].
\end{equation}
As a consequence we find that $\cZ_\q \cong \C[\q][\cX^+]$ without the need to impose further conditions on $\cG$. This implies and interpolates between the classical isomorphism $\cZ_{\q^{\pm 1}} \cong \C[\q^{\pm 1}][\cX^+]$ and the $\q=0$ version $\cZ_0 \cong \C[\cX^+]$. By results in \cite{CO1}, it also implies that, if $X/\Z[\Phi]$ is free, the algebra $\cH_\q$ is Frobenius over its center with a Nakayama automorphism which we can identify explicitly (Corollary \ref{cor:frob}).   

In a sense, Theorem \ref{thm:main} can be interpreted as interpolating between the Chevalley-Shephard-Todd theorem (which applies to polynomials) and the Pittie-Steinberg theorem \cite[Thm. 1.1]{Stein2} (which applies to Laurent polynomials). This will be made precise in \cite{CO2} where we use the ideas from this paper to obtain a version of the Chevalley-Shephard-Todd theorem that applies to more general toric varieties. 

In Section \ref{sec:toric} we note that the algebras $\cA_0$ and, more generally, $\cA_\q$ have a geometric interpretation in terms of the cohomology of toric varieties. More precisely, to a root system one can associate a fan $\Sigma$ corresponding to its Weyl chambers. The associated toric variety $X_\Sigma$ has been studied in a variety of contexts \cite{CP,Pr,DL,BB1}. For example, in type A, it is the Losev-Manin moduli space \cite{LM} which is closely related to the moduli space of pointed genus $0$ curves. The variety $X_\Sigma$ can also be identified with a regular semisimple Hessenberg variety \cite{dMPS} and the Weyl group action on its cohomology with the ``dot representation'' \cite{Tym}. This is related the Stanley-Stembridge conjecture \cite{SW,BC} (which seems to have recently been proven using a probability argument \cite{Hik}).

We show that $\cA_0$ is isomorphic to the equivariant cohomology $H_T^*(X_\Sigma,\C)$ and that $\cA_\q$ recovers the quantum cohomology with respect to the K\"ahler class corresponding to $\ell$ (Propositions \ref{prop:coh1} and \ref{prop:coh2}). As one would expect, the cone $\cC(\cX)$ has a natural generalization $\cC^\varphi(\cX)$ corresponding to any (strictly) convex $\Sigma$-piecewise linear function $\varphi$ and $\C[\cC^\varphi(\cX)]$ turns out to be the (equivariant) quantum cohomology with respect to the K\"ahler class $\varphi$. 
 
\subsection{Acknowledgements}
We thank Kalle Karu for discussing some concepts in the theory of toric varieties and suggesting that the Stanley-Reisner ideal might play a role. S.C. was supported by NSERC Discovery grant RGPIN-2019-03961 and R.O. by NSERC Discovery grant RGPIN-2019-03963.

\section{Notation and setup}

\subsection{Based root systems} \label{sec:rootsystem}
We will work over a fixed base ring $\r$. Consider a (reduced) based root system $(X, \cX, \Phi, \cPhi, \Pi,\cPi)$  (cf. \cite[1.1]{Lu1}) where $X$ and $\cX$ are free abelian groups of finite rank equipped with a perfect pairing $\la -,- \ra : X \times \cX \to \Z$. The finite sets $\Phi \subset X$ and $\cPhi \subset \cX$ are the sets of roots and coroots. There is a bijection $\alpha \leftrightarrow \check\alpha$ such that $\la \alpha ,\check\alpha \:\ra = 2$. For  every $\alpha\in \Phi$, the reflections 
 $s_\alpha: X \to X ,\: x \mapsto x- \la x, \calpha \ra \alpha$ and $ s_{\calpha}: \cX \to \cX , \: \cx \mapsto \cx- \la \alpha, \cx \ra \calpha$, preserve $\Phi$ and $\Phi^\vee$ respectively.  The base $\Pi \subset \Phi$ consists of simple roots and defines the sets $\Phi^+$ and $\Phi^-$ of positive and negative roots.  We define $\cPi=\{\calpha, \: \alpha\in \Pi\}$. 
 
We denote by $W_0$ the finite Weyl group, namely the subgroup of ${\rm GL}(\cX)$ generated by $\{s_{\calpha}\}_{\alpha\in \Phi}$.  It can be naturally identified with the subgroup of ${\rm GL}( X)$ generated by $\{s_{ \alpha}\}_{\alpha\in \Phi}$ (\cite[1.1]{Lu1}). Then $(W_0, S_0)$ is a (finite) Coxeter system, where $S_0:=\{s_{\calpha}\}_{\alpha \in \Pi} $.
 
Denote by $Q = \Z[\Phi]$ the root lattice and by $\cQ= \Z[\cPhi]$ to  the coroot lattice.  We define the $\R$-linear vector spaces $X_\R := X \otimes_{\Z} \R$  and $\cX_\R := \cX \otimes_{\Z} \R$ and similarly for $Q_\R$ and $\cQ_\R$.  A Weyl chamber in $\cQ_\R$  is a connected component of the complement of the root hyperplanes $\la  \alpha, v \ra= 0$ indexed by $\alpha \in \Phi$. The Weyl group $W_0$ acts transitively on the Weyl chambers (\cite[VI, 1.5]{Bki-LA}).  

We consider the discrete subgroup 
$$\cLa= \{v \in \cQ_\R: \la \alpha, v \ra \in \Z \text{ for all } \alpha \in \Phi \} \subset \cQ_\R\ .$$ 
The dual basis  $(\check\omega_\alpha)_{\alpha\in \Pi}$  of  $\Pi$ is a basis for $\cLa$ (\cite[VI, 1.9 and 1.10]{Bki-LA}). We define
\begin{align}
\cLa^+ 
\notag &= \{\l \in \cLa: \la  \alpha, \l \ra \ge 0 \text{ for any } \alpha \in \Phi^+ \} \\
&= \{ \sum_{\alpha \in\Pi} a_\alpha \comega_\alpha: a_\alpha \in \Z_{\ge 0} \} \label{L+}.
\end{align}
and similarly for $\cX^+$.

\subsection{The length function}\label{sec:length}

Define the set of affine roots by  ${\Phi_\aff}=\Phi\times \mathbb Z={\Phi_\aff^+}\coprod {\Phi_\aff^-}$ where
$${\Phi_\aff^+}:=\{(\alpha , r)\vert\: \alpha \in\Phi, \,r>0\}\cup\{(\alpha ,0),\, \alpha \in\Phi^+\}, \quad{\Phi_\aff^-}:=\{(\alpha , r)\vert\: \alpha \in\Phi, \,r<0\}\cup\{(\alpha ,0),\, \alpha \in\Phi^-\}.$$
There is  a partial order on $\Phi$ given by $\alpha \preceq \beta$ if and only if $\beta -\alpha $ is a linear combination with (integral) nonnegative coefficients of elements in $\Pi$. Denote by $\Pi _m$ the set of roots that are minimal elements  for $\preceq$. The set of simple affine roots is  $\Pi _\aff:=\{(\alpha , 0),\: \alpha \in\Pi \}\cup\{(\alpha ,1),\, \alpha \in\Pi _m\}$. 
The extended affine Weyl group is $W = W_0 \ltimes \cX$. An element  $w_0 \cx \in W_0 \ltimes \cX$ acts on $\Phi_\aff$ by $w_0 \cx: (\alpha , r)\mapsto ({w_0}(\alpha), r- \la  \alpha, \cx \ra)$.

The length $\ell$ on the Coxeter system $({W_0}, S_0)$ extends to $W $ so that for any $A \in \Pi _\aff$ and $w \in W$
\begin{equation}\label{add}
\ell(w s_A)= 
   \begin{cases}
       \ell(w)+1 & \textrm{ if }w (A)\in {\Phi_\aff^+},\\  \ell(w)-1 & \textrm{ if }w (A)\in {\Phi_\aff^-}.
    \end{cases}
\end{equation}
where $s_A$ is the  affine reflection associated to $A$. The restriction of $\ell$ to $\cX \subset W$ has the following description
$$\ell(\cx)=\sum_{\alpha \in \Phi^+}\vert \la  \alpha, \cx \ra \vert$$
for any $\cx \in \cX$ (see \cite[\S I.10]{IM}). Thus one can extend $\ell$ to a piecewise linear function
\begin{equation}\label{eq:ell}
\ell: \cX_\R \longrightarrow \R_{\ge 0}, \quad \cx \longmapsto \sum_{\alpha\in \Phi^+}\vert \la \alpha, \cx \ra \vert \ .
\end{equation}
This function on $\cX_\R$ is strictly convex in the sense that it satisfies 
\begin{equation}
\ell(\cx+\cx')\leq \ell(\cx)+\ell(\cx')\label{trigX}\textrm{ for all $\cx,\cx'\in \cX_\R $}
\end{equation} 
with equality if and only if  $\la \alpha, \cx \ra$ and $\la \alpha, \cx' \ra$ have the same sign for all $\alpha \in \Phi^+$.
Notice also that $\ell$ is invariant under the action of $W_0$ on $\cX$.

\subsection{\label{cones}Cones} 

Recall that 
$$\cX^+:=\{\cx \in \cX \vert \: \la \alpha, \cx \ra \ge 0 \:\:\forall \alpha \in \Phi^+\}.$$ 
We will be interested in the following two conic monoids
\begin{align}
\label{def:coneX} \cC(\cX) & :=\{(\cx, k) \in \cX \oplus \Z: \: k \ge \ell(\cx)\}  \\
\notag \cC(\cX^+) & := \{(\cx, k) \in  \cC(\cX) : \: \cx \in  \cX^+\}  \subset \cC(\cX).
\end{align}
Since $\ell$ is invariant under $W_0$, the action of $W_0$ on $\cX$ lifts to an action on $\cC(\cX)$. 

When working with the corresponding semigroup algebras over $\r$ we will use multiplicative notation $e^\cx$ to denote the image of $\cx \in \cX$ in $\r[\cX]$. 
The semigroup algebra $\r[\cC(\cX)]$ is an $\r[\q]$-algebra via the map 
\begin{equation*}
\r[\q] \rightarrow \r[\cC(\cX)], \quad \q \mapsto e^{(0,1)}
\end{equation*}
Note that $\r[\cC(\cX)]$ is naturally a $\Z_{\ge 0}$-graded algebra with $e^{(\cx,k)}$ having degree $k$. In particular, $\q$ in degree one.  

Clearly $\r[\cC(\cX^+)] \subset \r[\cC(\cX)]$ is an $\r[\q]$-subalgebra. Since $\ell$ is additive on $\cX^+$ we have an isomorphism of $\r[\q]$-algebras
\begin{align}
\label{eq:iso+} \r[\cC(\cX^+)] & \longrightarrow \r[\q][\cX^+] \\
\notag e^{(\cx, k)} & \longmapsto \q^{k-\ell(\cx)} e^\cx
\end{align}

On the other hand, the length function $\ell$ on $\cX$ induces a filtration on $\r[\cX]$ 
\begin{equation}\label{fil}
F^i \r[\cX] :={\rm Span}_\r \big\{ e^\cx \in \r[\cX] :\: \ell(\cx) \leq i \big\}.
\end{equation}
We denote the associated Rees algebra 
$$\Rees \r[\cX] = \bigoplus_{i \ge 0} t^i F^i \r[\cX]$$
where $t$ is the Rees parameter. The following result is an easy exercise. 

\begin{lemma}\label{lem:rees}
There exists an isomorphism of $\r$-algebras 
\begin{align*}
\r[\cC(\cX)] & \to \Rees \r[\cX] \\
e^{(\cx,k)} & \mapsto t^k e^\cx .
\end{align*}
\end{lemma}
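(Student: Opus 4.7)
The plan is to verify directly that the proposed map is a well-defined algebra homomorphism and then check bijectivity on a convenient basis. Since both sides are defined in terms of semigroups/submonoids that are closely related (indeed, the condition $(\cx,k)\in\cC(\cX)$ is exactly $\ell(\cx)\le k$, which is exactly the condition for $e^\cx$ to lie in $F^k\r[\cX]$), the map on the nose should be the identification of two descriptions of the same underlying free $\r$-module.

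First I would show the map is well-defined: for $(\cx,k) \in \cC(\cX)$ we have $\ell(\cx) \le k$, so $e^\cx \in F^k \r[\cX]$, and hence $t^k e^\cx \in t^k F^k\r[\cX]$ is a legitimate degree-$k$ element of $\Rees \r[\cX]$. Extend $\r$-linearly. Next I would check the multiplicative structure: since $\cC(\cX)$ is a monoid (if $\ell(\cx)\le k$ and $\ell(\cx')\le k'$ then $\ell(\cx+\cx')\le \ell(\cx)+\ell(\cx')\le k+k'$ by convexity \eqref{trigX}), the product $e^{(\cx,k)}e^{(\cx',k')} = e^{(\cx+\cx', k+k')}$ maps to $t^{k+k'} e^{\cx+\cx'}$, which agrees with $(t^k e^\cx)(t^{k'} e^{\cx'})$ in the Rees algebra. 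Compatibility with the $\r$-algebra structure is immediate.

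For bijectivity I would exhibit compatible bases. The set $\{e^{(\cx,k)} : (\cx,k) \in \cC(\cX)\}$ is an $\r$-basis of $\r[\cC(\cX)]$ by definition of a semigroup algebra. On the other side, the Rees algebra has $\r$-basis $\{t^k e^\cx : k \ge 0,\ \ell(\cx) \le k\}$, because the filtration $F^\bullet \r[\cX]$ is spanned by monomials $e^\cx$ and hence each piece $F^k\r[\cX]$ has basis $\{e^\cx : \ell(\cx)\le k\}$. The map identifies these two bases by the bijection $(\cx,k) \leftrightarrow (\cx,k)$, so it is an $\r$-module isomorphism, completing the proof.

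There is no real obstacle here: the only step requiring even a sentence of justification is verifying that $\cC(\cX)$ is closed under addition (which uses convexity of $\ell$), after which everything reduces to a routine bookkeeping check that two bases are in bijection and the multiplications match.
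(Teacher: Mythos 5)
Your proof is correct, and since the paper leaves this lemma as ``an easy exercise'' with no written proof, your verification (well-definedness from $\ell(\cx)\le k$, multiplicativity via the convexity inequality \eqref{trigX}, and bijectivity by matching the monomial bases of the two sides) is exactly the intended routine argument.
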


\begin{remark}\label{rem:filtered}
On several occasions we will make use of the elementary fact that if $\phi$ is a map of $\N$-filtered $\r$-modules and $\gr \phi$ is an isomorphism then so are $\phi$ and  the corresponding map of Rees algebras.
\end{remark}

\subsection{Generic Hecke algebras} \label{subsec:Hecke}

The  (extended) generic affine Hecke algebra $\cH_\q$ (cf. \cite[Sect. 3.2]{Lu1}) is the $\r[\q]$-algebra which is free as an $\r[\q]$-module with basis $\{  T_w\}_{w \in W}$ and subject to relations 
\begin{align*} 
 T_v   T_w =   T_{vw} & \quad \text{ if } \ell(vw)=\ell(v)+\ell(w)  \\
( T_s - \q^2)(  T_s + 1)=0 & \quad \text{ for  } s \in \Saff.
\end{align*} 

Following \cite[3.3 (a)]{Lu1}, we define in $\theta_\cx \in \cH_{\q^{\pm 1}}$  for $\cx \in \cX$  as follows. Write $\cx = \cx_1-\cx_2$ with $\cx_1, \cx_2 \in \cX^+$ and let 
$$\theta_\cx:= \q^{{\ell(\cx_2)-\ell(\cx_1)}}  T_{\cx_1} T_{\cx_2}^{-1}\ .$$
By \cite[3.3 (b) and Lemma 3.4]{Lu1},  the map
\begin{align}\label{theta}
\r[\q^{\pm 1}][\cX] & \longrightarrow  \cH_\q \otimes_{\r[\q]} \r[\q^{\pm 1}]\cr 
\cx & \longmapsto \theta_\cx
\end{align}
is an injective homomorphism of $\r[\q^{\pm 1}]$-algebras. We denote by $\cA_{\q^{\pm 1}}$ its image. It is endowed with a 
 $\r[\q^{\pm 1}]$-action of $W_0$ via the action of $W_0$ on $\cX$. The center $\cZ_{\q^{\pm 1}} \subset \cH_{\q^{\pm 1}}$ is the algebra of $W_0$-invariants $\cA_{\q^{\pm 1}}^{W_0}$ (\cite[Prop. 3.11]{Lu1}).
 
To define a generic version of $\cA_{\q^{\pm 1/2}}$ consider the renormalized elements 
\begin{equation}\label{Ex}
E_\cx:= \q^{{\ell(\cx)}}\theta_\cx\ .
\end{equation} 
Note that $E_\cx=T_\cx$ if $\cx \in \cX^+$. These elements were first introduced in \cite[Prop. 7]{Vig1} in the Iwahori-Hecke algebra attached to a $p$-adic reductive group and then in \cite[Thm. 2.7, Cor 2.8 and Example 5.30]{VigI} for the generic Iwahori-Hecke algebra. Following \emph{loc. cit} they satisfy the following properties:
\begin{enumerate}
\item[(i)] $E_\cx \in \cH_\q$ for all $\cx \in \cX$,
\item[(ii)]  $E_\cx \in  T_\cx + \sum_{w \in W, \ell(w)<\ell(\cx)} \r[\q] T_w$,
\item[(iii)]  $E_\cx E_{\cx'}= \q^{{\ell(\cx)+\ell(\cx')-\ell(\cx+\cx')}} E_{\cx+\cx'} \in \cH_\q$ for all $\cx,\cx'\in \cX$. 
 \end{enumerate}
 
\begin{lemma}\label{lem:A}
The following is an injective map of $\r[\q]$-algebras
\begin{align}\label{E}
\r[\cC(\cX)] & \longrightarrow  \cH_{\q} \cr  
e^{(\cx, k)} & \longmapsto \q^{{k-\ell(\cx)}}E_\cx \ .
\end{align} 
\end{lemma}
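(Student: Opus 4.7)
The plan is to check the map is a well-defined $\r[\q]$-algebra homomorphism using properties (i) and (iii) of the elements $E_\cx$, and then to deduce injectivity by base-changing to $\r[\q^{\pm 1}]$ and reducing to the already-known injectivity in~\eqref{theta}.

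First I would verify well-definedness: since $(\cx,k) \in \cC(\cX)$ satisfies $k \ge \ell(\cx)$, the exponent $k-\ell(\cx)$ is a nonnegative integer, so $\q^{k-\ell(\cx)} E_\cx$ lies in $\cH_\q$ by (i). The generators $e^{(0,0)}$ and $e^{(0,1)}$ go to $1$ and $\q$ respectively, giving $\r[\q]$-linearity. For multiplicativity, observe that given $(\cx,k),(\cx',k') \in \cC(\cX)$, the subadditivity \eqref{trigX} ensures $(\cx+\cx',k+k') \in \cC(\cX)$, and property~(iii) gives
\begin{align*}
\q^{k-\ell(\cx)} E_\cx \cdot \q^{k'-\ell(\cx')} E_{\cx'}
&= \q^{k+k'-\ell(\cx)-\ell(\cx')} \cdot \q^{\ell(\cx)+\ell(\cx')-\ell(\cx+\cx')} E_{\cx+\cx'} \\
&= \q^{k+k'-\ell(\cx+\cx')} E_{\cx+\cx'},
\end{align*}
which is the image of $e^{(\cx,k)} e^{(\cx',k')} = e^{(\cx+\cx',k+k')}$.

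For injectivity, I would use that $\cH_\q$ is free as an $\r[\q]$-module with basis $\{T_w\}_{w \in W}$, so $\q$ is a non-zero-divisor and the localization map $\cH_\q \hookrightarrow \cH_\q \otimes_{\r[\q]} \r[\q^{\pm 1}] = \cH_{\q^{\pm 1}}$ is injective. It therefore suffices to prove the composite $\r[\cC(\cX)] \to \cH_{\q^{\pm 1}}$ is injective. Using the defining relation $E_\cx = \q^{\ell(\cx)} \theta_\cx$, this composite sends $e^{(\cx,k)} \mapsto \q^k \theta_\cx$, and by the injectivity of the map \eqref{theta}, it factors as
\[
\r[\cC(\cX)] \longrightarrow \r[\q^{\pm 1}][\cX] \xhookrightarrow{\;\cx \mapsto \theta_\cx\;} \cH_{\q^{\pm 1}},
\]
where the first arrow is $e^{(\cx,k)} \mapsto \q^k e^\cx$. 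Since $\{\q^k e^\cx : k \in \Z,\,\cx \in \cX\}$ is an $\r$-basis of $\r[\q^{\pm 1}][\cX]$ and the pairs $(\cx,k)$ parametrizing elements of $\cC(\cX)$ are all distinct, this first arrow is an injection of $\r$-modules, completing the proof.

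There is no real obstacle here: once one knows property~(iii) of the $E_\cx$ and the classical injectivity \eqref{theta}, the argument is essentially bookkeeping. The only subtle point is remembering to pass to $\cH_{\q^{\pm 1}}$ before invoking \eqref{theta}, which is clean because $\cH_\q$ is $\r[\q]$-free. An alternative route via Lemma~\ref{lem:rees} and Remark~\ref{rem:filtered}, comparing the length filtration on $\cH_\q$ (in which $E_\cx$ reduces to $T_\cx$ by property (ii)) with the Rees construction, would also work and may be more natural in view of the later sections, but the direct argument above avoids setting up the filtration explicitly.
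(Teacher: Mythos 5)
Your proposal is correct and follows the same route as the paper, whose proof is exactly the two observations you elaborate: the map is an algebra homomorphism by properties (i) and (iii), and it is injective because \eqref{theta} is. The extra bookkeeping you supply (the exponent $k-\ell(\cx)\ge 0$, $\q$ being a non-zero-divisor so that $\cH_\q \hookrightarrow \cH_{\q^{\pm 1}}$, and the factorization through $e^{(\cx,k)}\mapsto \q^k e^\cx$) is just a careful unpacking of that same argument.
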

\begin{proof}
The map is a homomorphism of algebras because of properties (i) and (iii) above. It is injective since \eqref{theta} is injective.
\end{proof} 
We denote by $\cA_\q$ the image of \eqref{E}.

\section{The center of $\cH_\q$}

The main goal in this section is to prove the following result. 

\begin{theorem}\label{thm:main}
There exists an isomorphism of $\r[\q]$-algebras 
\begin{equation}\label{eq:main1}
\r[\cC(\cX)]^{W_0} \cong \r[\q][\cX^+].
\end{equation}
\end{theorem}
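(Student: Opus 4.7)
The plan is to use Lemma \ref{lem:rees} and the $W_0$-invariance of $\ell$ to recast the statement as an isomorphism of Rees algebras, analyze the multiplication of orbit sums at the graded level, and then appeal to Remark \ref{rem:filtered}.

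Since $\ell$ is $W_0$-invariant, the length filtration $F^\bullet \r[\cX]$ is $W_0$-stable; a direct degree-by-degree argument then gives $\r[\cC(\cX)]^{W_0} \cong (\Rees \r[\cX])^{W_0} \cong \Rees(\r[\cX]^{W_0})$, where the right-hand side uses the induced filtration. Combined with \eqref{eq:iso+} and Lemma \ref{lem:rees}, the theorem is equivalent to
\[
\Rees(\r[\cX]^{W_0}) \,\cong\, \Rees(\r[\cX^+])
\]
as $\r[t]$-algebras, with $\r[\cX^+]$ filtered by $\ell$ (in fact a grading, since $\ell$ is additive on $\cX^+$).

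The key combinatorial input is the multiplication of the orbit-sum basis $\sigma_\cx := \sum_{\cx' \in W_0\cdot\cx} e^{\cx'}$ (for $\cx \in \cX^+$, with $\sigma_\cx \in F^{\ell(\cx)}$). Expanding $\sigma_\cx \sigma_{\cx'} = \sum_{w,w'} e^{w\cx + w'\cx'}$ and using the strict convexity \eqref{trigX} of $\ell$, the summands reaching the top filtration piece $F^{\ell(\cx+\cx')}$ are exactly those with $w^{-1}w' \in \Stab(\cx)\Stab(\cx')$; using $\Stab(\cx)\cap\Stab(\cx') = \Stab(\cx+\cx')$, a short count shows these contributions reassemble into $\sigma_{\cx+\cx'}$. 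Hence
\[
\sigma_\cx \,\sigma_{\cx'} \,\equiv\, \sigma_{\cx+\cx'} \pmod{F^{\ell(\cx+\cx')-1}\, \r[\cX]^{W_0}},
\]
and the assignment $[\sigma_\cx] \mapsto e^\cx$ yields a graded algebra isomorphism $\gr(\r[\cX]^{W_0}) \cong \r[\cX^+]$.

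By Remark \ref{rem:filtered} it suffices to produce a degree-preserving $\r[\q]$-algebra homomorphism $\Phi: \r[\q][\cX^+] \to \r[\cC(\cX)]^{W_0}$ sending $\q \mapsto e^{(0,1)}$ whose mod-$\q$ reduction recovers the graded iso above. I would construct $\Phi$ by choosing lifts $\Phi(e^g) = \sigma_g + (\q\text{-positive corrections})$ for $g$ in a fixed finite set of monoid generators of $\cX^+$ and extending multiplicatively. The main obstacle lies in verifying consistency when $\cX^+$ is not freely generated as a monoid — precisely when classical Pittie--Steinberg fails — which requires that the $\q$-corrections can be chosen so that the monoid relations of $\cX^+$ are satisfied in $\r[\cC(\cX)]^{W_0}$. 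This is the essential role of the Rees parameter $\q$: it furnishes the extra freedom that makes the isomorphism uniform in $\cX$, whereas no analogous identification need hold at $\q = 1$.
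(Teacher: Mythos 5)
Your reduction to Rees algebras and your graded computation are both fine, and they match the paper's preliminary steps: the $W_0$-stability of the length filtration, Lemma \ref{lem:rees}, and the orbit-sum multiplication $\sigma_{\cx}\sigma_{\cx'} \equiv \sigma_{\cx+\cx'} \pmod{F^{\ell(\cx+\cx')-1}}$ are exactly the content of Lemmas \ref{lem:orbit2} and \ref{lem:fact} (stated there for $\cLa$ but valid for $\cX$ by the same argument). The problem is the last step, which you defer rather than prove: you propose to define $\Phi$ on a finite set of monoid generators of $\cX^+$ by $e^g \mapsto \sigma_g + (\q\text{-corrections})$ and "extend multiplicatively", and you yourself flag that consistency with the monoid relations of $\cX^+$ is "the main obstacle". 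That obstacle is the actual theorem. When $\cX^+$ is not a free monoid (the generic situation outside the adjoint case), there is no argument given that the required corrections exist; the naive choice with no corrections does not work in general, because products of orbit sums agree with $\sigma_{\cx+\cx'}$ only modulo lower filtration, and the lower-order terms depend on the pair of factors rather than on their sum, so a relation $\sum_i a_i g_i = \sum_j b_j h_j$ in $\cX^+$ need not yield an identity $\prod_i \sigma_{g_i}^{a_i} = \prod_j \sigma_{h_j}^{b_j}$ in $\r[\cX]^{W_0}$ on the nose. Remark \ref{rem:filtered} only converts an already-constructed filtered homomorphism with invertible $\gr$ into an isomorphism; it cannot manufacture the homomorphism for you.

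The paper's proof is precisely a device for avoiding this relations problem. It first proves the theorem when the monoid \emph{is} free: $\cLa^+$ is freely generated by the fundamental coweights $\comega_\alpha$, so $\phi_\cLa$ in Proposition \ref{prop:lam} can be defined on generators with no consistency check, and Lemma \ref{lem:fact} plus Remark \ref{rem:filtered} make it an isomorphism. For general $\cX$ it embeds $\cx \mapsto (p(\cx), q(\cx))$ into $\cY = p(\cX)\oplus\cLa$ (Lemma \ref{lem:proj}), defines $\phi_\cY = \id\otimes\phi_\cLa$ on the larger algebra where no relations intervene, and then uses the $\cLa/\cQ$-argument of Lemma \ref{lem:lam} to show that the restriction of $\phi_\cY$ to $\r[\cX^+]$ actually lands in $\r[\cX]^{W_0}$; multiplicativity is inherited from $\phi_\cY$, and the graded computation finishes as you describe. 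If you want to complete your argument along your own lines, you must either exhibit the corrected lifts on generators and verify every relation of $\cX^+$, or import a construction such as the paper's embedding trick; as written, the proposal establishes only the associated-graded isomorphism $\gr(\r[\cX]^{W_0}) \cong \r[\cX^+]$, not \eqref{eq:main1}.
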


By \cite[Thm. 1.2]{VigII} (cf. \cite[Thm. 4]{VigII}) the center $\cZ_\q \subseteq \cH_\q$ is given by $\cZ_\q = (\cA_\q)^{W_0}$.  Using Lemma \ref{lem:A} we get the following.

\begin{corollary}\label{coro:main}
There exists  isomorphisms of $\r[\q]$-algebras $\cZ_\q \cong  \r[\q][\cX^+] \cong \r[\q][\cX]^{W_0}$.
\end{corollary}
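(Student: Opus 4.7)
The plan is to reformulate everything via Rees algebras. By Lemma \ref{lem:rees} the semigroup algebra $\r[\cC(\cX)]$ is identified with $\Rees \r[\cX]$, where $\r[\cX]$ is filtered by length. The first step is to show that taking $W_0$-invariants commutes with the Rees construction, i.e. $(\Rees \r[\cX])^{W_0} \cong \Rees(\r[\cX]^{W_0})$, where $\r[\cX]^{W_0}$ is given the induced filtration $F^i \r[\cX]^{W_0} := F^i \r[\cX] \cap \r[\cX]^{W_0}$. This is a formal check: since $W_0$ acts trivially on the Rees parameter, a Rees element $\sum_i t^i a_i$ is $W_0$-invariant iff each $a_i$ lies individually in $F^i \r[\cX] \cap \r[\cX]^{W_0}$.

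Next I would identify the filtered algebra $(\r[\cX]^{W_0}, F^\bullet)$ with $(\r[\cX^+], G^\bullet)$, where $G^i \r[\cX^+] := \Span_\r\{e^\cx : \cx \in \cX^+,\ \ell(\cx) \le i\}$. On the invariants side, the orbit sums $m_\cx := \sum_{y \in W_0 \cdot \cx} e^y$ for $\cx \in \cX^+$ form an $\r$-basis of $\r[\cX]^{W_0}$; since $\ell$ is $W_0$-invariant, each $m_\cx$ lies in $F^{\ell(\cx)} \setminus F^{\ell(\cx)-1}$, so $F^i \r[\cX]^{W_0}$ has basis $\{m_\cx : \ell(\cx) \le i\}$. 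The Pittie--Steinberg theorem (in the form $\r[\cX^+] \cong \r[\cX]^{W_0}$) provides a ring isomorphism whose image of $e^\cx$ decomposes in the orbit-sum basis as $m_\cx + \sum_{\mu < \cx} c_\mu m_\mu$, where $\mu$ ranges over dominant coweights strictly less than $\cx$ in dominance order. The decisive combinatorial input is that for $\mu, \cx \in \cX^+$ with $\mu < \cx$ in dominance one has $\ell(\mu) < \ell(\cx)$: writing $\cx - \mu = \sum_{\alpha \in \Pi} c_\alpha \calpha$ with $c_\alpha \ge 0$ not all zero, one computes
\[
\ell(\cx) - \ell(\mu) = \sum_{\beta \in \Phi^+} \la \beta, \cx - \mu \ra = \sum_\alpha c_\alpha \sum_{\beta \in \Phi^+} \la \beta, \calpha \ra = 2 \sum_\alpha c_\alpha > 0,
\]
using $\ell(\cdot) = \sum_{\beta \in \Phi^+}\la \beta, \cdot \ra$ on $\cX^+$ together with the identity $\sum_{\beta \in \Phi^+}\la \beta, \calpha \ra = 2$ for every simple coroot (which follows from the fact that $s_\alpha$ permutes $\Phi^+ \setminus \{\alpha\}$). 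Hence the Pittie--Steinberg isomorphism is triangular with respect to the length filtration, i.e. filtered.

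The final step is to identify $\Rees(\r[\cX^+], G^\bullet)$ with $\r[\q][\cX^+]$. Since $\ell$ is additive on $\cX^+$, the filtration $G^\bullet$ comes from the $\Z_{\ge 0}$-grading $\deg(e^\cx) := \ell(\cx)$, and the Rees algebra equals $\r[\q][\cX^+]$ explicitly via $t^k e^\cx \mapsto \q^{k - \ell(\cx)} e^\cx$. Composing the isomorphisms $\r[\cC(\cX)]^{W_0} \cong (\Rees \r[\cX])^{W_0} \cong \Rees(\r[\cX]^{W_0}) \cong \Rees(\r[\cX^+], G^\bullet) \cong \r[\q][\cX^+]$ yields the desired result.

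The main obstacle is the filtered Pittie--Steinberg step: the underlying ring isomorphism $\r[\cX^+] \cong \r[\cX]^{W_0}$ is classical but nontrivial, and upgrading it to a filtered one requires the dominance--length monotonicity established above. For simply-connected $\cG$ the picture is especially clean since $\r[\cX]^{W_0}$ is a polynomial ring in the fundamental-weight characters $\chi_{\comega_\alpha}$, each of length $\ell(\comega_\alpha)$; in general one must invoke Pittie--Steinberg together with the dominance/length compatibility.
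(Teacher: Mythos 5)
Your Rees-theoretic frame is sound in its formal parts, but the pivotal middle step is assumed rather than proven, and it is precisely the content of Theorem \ref{thm:main}. You claim that ``Pittie--Steinberg'' supplies a ring isomorphism $\r[\cX^+]\to\r[\cX]^{W_0}$ sending $e^{\cx}$ to $z_{\cO(\cx)}$ (your $m_{\cx}$) plus a combination of orbit sums $z_{\cO(\cmu)}$ with $\cmu<\cx$ in dominance order. The Pittie--Steinberg theorem says nothing of the sort: it asserts that $\r[\cX]$ is free of rank $|W_0|$ over $\r[\cX]^{W_0}$ (and needs $X/Q$ torsion-free), not that $\r[\cX]^{W_0}$ is the monoid algebra of $\cX^+$. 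The statement you need is standard only when $\cX=\cLa$ (dual group with simply connected derived subgroup), where $\r[\cX]^{W_0}$ is a polynomial ring on fundamental characters and the unitriangular expansion in orbit sums is weight theory. The corollary, however, is asserted for an arbitrary root datum and arbitrary $\r$ --- removing such hypotheses is the very point of the paper --- and the general case is where the real work happens: Proposition \ref{prop:lam} builds the isomorphism for $\cLa$ out of orbit sums of the $\comega_\alpha$ (via Lemmas \ref{lem:orbit2} and \ref{lem:fact}), and Section \ref{sec:proof} extends it to general $\cX$ by embedding $\cX\hookrightarrow p(\cX)\oplus\cLa$ and using Lemma \ref{lem:lam} to check the constructed map lands back in $\r[\cX]$. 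Your closing sentence (``in general one must invoke Pittie--Steinberg together with the dominance/length compatibility'') is exactly the gap: no cited theorem produces the required triangular ring isomorphism in this generality. The economical fix is simply to cite Theorem \ref{thm:main}, which is what the paper's proof of Corollary \ref{coro:main} does.

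Two further points. First, your chain of isomorphisms never touches $\cZ_\q$: to prove the corollary as stated you must also invoke Vign\'eras' result that $\cZ_\q=(\cA_\q)^{W_0}$ together with Lemma \ref{lem:A} identifying $\cA_\q\cong\r[\cC(\cX)]$, and you should record the second isomorphism $\r[\q][\cX]^{W_0}\cong\r[\q][\cX^+]$ (which does follow from your middle step at $\q=1$, or by tensoring with $\r[\q]$). Second, on the positive side: the commutation of $W_0$-invariants with the Rees construction is correct, the identification $\Rees(\r[\cX^+])\cong\r[\q][\cX^+]$ via additivity of $\ell$ on $\cX^+$ is exactly \eqref{eq:iso+}, and your observation that $\cmu<\cla$ in dominance forces $\ell(\cmu)<\ell(\cla)$ (via $\sum_{\beta\in\Phi^+}\la\beta,\calpha\ra=2$) is correct and would serve as a legitimate alternative to the convexity argument in Lemma \ref{lem:fact} --- but only after the triangular ring isomorphism exists, which in general is the theorem itself.
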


\subsection{Orbits}\label{sec:orbits}

We will be making use of the projection 
\begin{equation}\label{eq:proj}
\begin{array}{cccl} p: & \cX_\R & \longrightarrow & \cX_\R \cr 
& \cx & \longmapsto & \frac{1}{\vert W_0 \vert} \sum_{w \in W_0} w(\cx) 
\end{array}
\end{equation}
We denote $q(\cx) := \cx - p(\cx)$. 

\begin{lemma}\label{lem:proj}
For any $\cx \in \cX$ we have $q(\cx) \in \cLa$. 
\end{lemma}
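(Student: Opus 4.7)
The plan is to check directly the two conditions that define $\cLa$: namely that $q(\cx)$ belongs to $\cQ_\R$, and that $\la \alpha, q(\cx) \ra \in \Z$ for every $\alpha \in \Phi$.

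For the first condition, I would rewrite
\begin{equation*}
q(\cx) = \cx - p(\cx) = \frac{1}{|W_0|}\sum_{w \in W_0}(\cx - w(\cx))
\end{equation*}
and argue that each summand $\cx - w(\cx)$ already lies in the coroot lattice $\cQ$. This reduces, by induction on the length of $w$, to the case of a simple reflection $w = s_{\alpha}$, where $\cx - s_\alpha(\cx) = \la \alpha, \cx \ra \check\alpha \in \cQ$ by the defining formulas of Section~\ref{sec:rootsystem}. The inductive step uses a telescoping identity along a reduced decomposition $w = s_{\alpha_1}\cdots s_{\alpha_k}$: setting $w_j := s_{\alpha_1}\cdots s_{\alpha_j}$ one has
\begin{equation*}
\cx - w(\cx) = \sum_{j=1}^{k} \bigl(w_{j-1}(\cx) - w_j(\cx)\bigr) = \sum_{j=1}^{k} \la \alpha_j, \cx \ra\, w_{j-1}(\check\alpha_j),
\end{equation*}
and the right-hand side lies in $\cQ$ because $W_0$ preserves $\cQ$. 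Averaging, $q(\cx) \in \cQ_\R$.

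For the second condition, the key observation is that $p(\cx)$ is $W_0$-invariant by construction. Hence for every $\alpha \in \Phi$ we have $s_\alpha(p(\cx)) = p(\cx)$, which combined with the formula $s_\alpha(v) = v - \la \alpha, v \ra \check\alpha$ yields $\la \alpha, p(\cx) \ra \check\alpha = 0$, and therefore $\la \alpha, p(\cx) \ra = 0$ since $\check\alpha \ne 0$. Consequently
\begin{equation*}
\la \alpha, q(\cx) \ra = \la \alpha, \cx \ra - \la \alpha, p(\cx) \ra = \la \alpha, \cx \ra \in \Z,
\end{equation*}
using that $\cx \in \cX$ and $\alpha \in \Phi \subset X$ pair integrally.

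Combining the two steps gives $q(\cx) \in \cQ_\R$ with integer pairings against every root, so $q(\cx) \in \cLa$ by the definition given in Section~\ref{sec:rootsystem}. There is no serious obstacle: the only mildly non-obvious move is the telescoping in the first step, and everything else is forced by the $W_0$-invariance of $p(\cx)$.
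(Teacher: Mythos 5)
Your proof is correct and follows essentially the same route as the paper: show $\cx - w(\cx) \in \cQ$ by induction on $\ell(w)$ (hence $q(\cx) \in \cQ_\R$ after averaging), then use the $W_0$-invariance of $p(\cx)$ to get $\la \alpha, p(\cx) \ra = 0$ and thus $\la \alpha, q(\cx) \ra = \la \alpha, \cx \ra \in \Z$. The telescoping identity you write out is just a more explicit rendering of the paper's one-line induction.
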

\begin{proof}
By induction on on $\ell(w)$ it follows that $\cx-w(\cx)\in \cQ$ for any $w \in W_0$. Therefore $\cx-p(\cx) \in \cQ_\R$. Furthermore, since $p(\cx)$ is $W_0$-invariant, we have $\la \alpha, p(\cx) \ra = 0$ and so $\la \alpha, \cx-p(\cx) \ra = \la \alpha, \cx \ra \in \Z$ for all $\alpha \in \Phi$. Hence $\cx-p(\cx) \in \cLa$.
\end{proof}

\begin{lemma}  \label{lem:orbit} 
Each $W_0$-orbit in $\cX$ intersects $\cX^+$ in exactly one point.
\end{lemma}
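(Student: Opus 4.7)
The plan is to deduce this from the classical fact that the closure of the dominant Weyl chamber is a fundamental domain for the $W_0$-action on the real vector space $\cX_\R$.

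First I would observe that, setting $\overline{C^+} := \{ v \in \cX_\R : \la \alpha, v \ra \ge 0 \textrm{ for all } \alpha \in \Phi^+ \}$, one has $\cX^+ = \cX \cap \overline{C^+}$ directly from the definition recalled in Section \ref{cones}. Next I would invoke \cite[VI, 1.5, Thm.~2]{Bki-LA}, which asserts that every $W_0$-orbit in $\cX_\R$ meets $\overline{C^+}$ in exactly one point. Finally, since $W_0$ preserves the lattice $\cX$, the entire $W_0$-orbit of any $\cx \in \cX$ is contained in $\cX$, so its unique representative in $\overline{C^+}$ automatically lies in $\cX \cap \overline{C^+} = \cX^+$. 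This yields existence and uniqueness simultaneously.

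If a more self-contained argument is desired, existence follows constructively: whenever $\la \alpha, \cx \ra < 0$ for some $\alpha \in \Pi$, replace $\cx$ by $s_\alpha(\cx)$. Writing $2\rho$ for the sum of positive roots, the pairing $\la 2\rho, \cdot \ra$ strictly increases at each such step because $\la 2\rho, \check\alpha \ra = 2$ for each simple $\alpha$; since it takes only finitely many values on the orbit of $\cx$, the procedure terminates at an element of $\cX^+$. Uniqueness can then be checked by induction on $\ell(w)$ for $w \in W_0$ with $w(\cx) = \cy$ and $\cx, \cy \in \cX^+$: using a reduced decomposition $w = s_\alpha w'$ with $\alpha \in \Pi$, the exchange condition gives $w^{-1}(\alpha) \in \Phi^-$, and combining $\la \alpha, \cy \ra \ge 0$ with $\la \alpha, \cy \ra = \la w^{-1}(\alpha), \cx \ra \le 0$ forces $s_\alpha(\cy) = \cy$, hence $w'(\cx) = \cy$ and one concludes by the inductive hypothesis.

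The main obstacle is essentially non-existent — this is a foundational statement in the theory of root systems. The only point that requires any care is the passage from Bourbaki's statement in $\cX_\R$ to the integral version stated in the lemma, which is immediate because $W_0$ preserves $\cX$ by construction.
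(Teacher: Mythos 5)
Your proposal is correct, and it differs from the paper's proof mainly in how existence is handled. The paper's uniqueness argument is essentially your fallback one: it takes $w$ of minimal length in the coset $w\,\Stab_{W_0}(\cx)$ instead of inducting on $\ell(w)$, but the key step --- $\la \alpha, w(\cx)\ra = \la w^{-1}(\alpha),\cx\ra$ forcing $\la\alpha,w(\cx)\ra=0$ and hence $s_{\calpha}$ into the stabilizer --- is identical. For existence, the paper does not quote a fundamental domain theorem in $\cX_\R$: since $\Phi$ need not span $\cX_\R$, it first uses Lemma \ref{lem:proj} to write $\cx=p(\cx)+q(\cx)$ with $q(\cx)\in\cLa\subset\cQ_\R$, and only then moves $q(\cx)$ into $\cLa^+$ using the $W_0$-action on the Weyl chambers of $\cQ_\R$, where the Bourbaki statement applies verbatim. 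Your one-line citation of \cite[VI, 1.5, Thm.\ 2]{Bki-LA} glosses over exactly this point: that theorem concerns the essential case (the span of the roots), so to apply it to all of $\cX_\R$ you should either pass through the splitting $\cX_\R=\cX_\R^{W_0}\oplus\cQ_\R$ (which is precisely what Lemma \ref{lem:proj} encodes) or instead cite the Ch.\ V, \S 3.3 fundamental-domain theorem, which carries no essentiality hypothesis; this is a matter of citation precision rather than a genuine gap, since the extension is immediate. Your constructive existence argument (strictly increasing $\la 2\rho,\cdot\ra$, with $2\rho$ the sum of the positive roots, under simple reflections) is a correct and genuinely more elementary alternative that avoids Lemma \ref{lem:proj} altogether; note, however, that the paper reuses Lemma \ref{lem:proj} in Section \ref{sec:proof}, so routing existence through it costs the paper nothing extra, whereas your route would keep the lemma but make it unused here.
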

\begin{proof}
We first check that each $W_0$-orbit in $\cX$ intersects $\cX^+$ in at most one point.  Let $\cx \in \cX^+$ and $w \in W_0$ such that $w(\cx) \in \cX^+$. We may  take $w$ to be of minimal length in the left coset $w \, {\rm Stab}_{W_0}(\cx)$.  Assume $\ell(w) \geq 1$ and let $\alpha\in \Pi$  such that $\ell(s_{\calpha} w)=\ell(w)-1$. This means that $w^{-1}(\alpha)\in \Phi^-$ so $\la w^{-1}(\alpha), \cx \ra \le 0$. But  $\la  w^{-1}(\alpha) , \cx \ra = \la \alpha , w(\cx) \ra \ge 0$. This means $\la \alpha , w(\cx) \ra=0$ and hence $s_{\calpha}(w(\cx))= w(\cx)$ and  $s_{\calpha} w \in w \,{\rm Stab}_{W_0}(\cx)$. This contradicts the minimality of $w$. 

It remains to show that each $W_0$-orbit intersects $\cX^+$ at least once.  By Lemma \ref{lem:proj} we know $\cx-p(\cx) \in \cLa$ so choose $w \in W_0$ such that $w(\cx-p(\cx))= w(\cx)-p(\cx) \in \cLa^+$. For $\alpha \in \Phi^+$  we have $\la \alpha, w(\cx) \ra =\la \alpha, w(\cx)-p(\cx) \ra \in \Z_{\geq 0}$. So $w(\cx) \in \cX^+$.
\end{proof}

For $\cx \in \cX$ denote by $\cO(\cx)$ its $W_0$-orbit. By Lemma \ref{lem:orbit} there is a bijection 
\begin{align*}
\cX^+ & \longrightarrow \{ \text{ $W_0$-orbits in $\cX$ }  \} \\ 
\cx & \longmapsto \cO(\cx) \ .
\end{align*}
For $\cx \in \cX$ we denote
\begin{align}
\label{zl1} z_{\cO(\cx)} & :=  \sum_{\cx' \in \cO(\cx)} e^{\cx'} \in \r[\cX]  \ .
\end{align} 
In particular, $\{z_{\cO(\cx)}\}_{\cx \in \cX^+}$ is an $\r$-basis for $\r[\cX]^{W_0}$.

\begin{lemma} \label{lem:orbit2}
If $\cl, \cl' \in \cLa^+$, then the map
 \begin{align*}
 \{(\cmu,\cmu')\in  \cO(\l)\times  \cO(\l')\,\vert\, \cmu\sim \cmu'\} & \to \cO(\cl+\cl') \\
 (\cmu,\cmu') & \mapsto \cmu+\cmu'
 \end{align*}
 is a bijection, where we write $\cmu\sim \cmu'$ if they belong to a common Weyl chamber. 
\end{lemma}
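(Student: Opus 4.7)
The plan is to exploit the transitivity of $W_0$ on Weyl chambers as the main tool. If $(\cmu,\cmu')$ lies in the domain, then by definition $\cmu$ and $\cmu'$ lie in the closure of a common Weyl chamber, so there exists $w \in W_0$ sending that chamber onto the closure of the dominant chamber; consequently $w(\cmu), w(\cmu') \in \cLa^+$. Running the argument of Lemma \ref{lem:orbit} in $\cLa$ (the proof uses only the $W_0$-action and the dominance condition with respect to $\Phi^+$), each $W_0$-orbit meets $\cLa^+$ in exactly one point, so $w(\cmu) = \cl$ and $w(\cmu') = \cl'$. In particular $w(\cmu+\cmu') = \cl+\cl'$, which proves that the map is well-defined.

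For surjectivity, given $\nu \in \cO(\cl+\cl')$, pick any $w \in W_0$ with $\nu = w(\cl+\cl')$ and set $\cmu := w(\cl)$, $\cmu' := w(\cl')$. Then $\cmu \in \cO(\cl)$, $\cmu' \in \cO(\cl')$, and $\cmu+\cmu' = \nu$. Since $\cl, \cl'$ both lie in the closure of the dominant chamber, their images $\cmu, \cmu'$ both lie in the closure of the single chamber $w(\cLa^+)$, hence $\cmu \sim \cmu'$.

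The main obstacle is injectivity, which relies on a careful analysis of the stabilizer of $\cl+\cl'$. Suppose $(\cmu_1,\cmu_1')$ and $(\cmu_2,\cmu_2')$ both map to the same $\nu$. By the first paragraph, pick $w_1, w_2 \in W_0$ with $w_i(\cmu_i) = \cl$ and $w_i(\cmu_i') = \cl'$. Then $u := w_1 w_2^{-1}$ sends $\cl+\cl' = w_2(\cmu_2+\cmu_2') = w_2(\cmu_1+\cmu_1')$ to $w_1(\cmu_1+\cmu_1') = \cl+\cl'$, so $u \in \Stab_{W_0}(\cl+\cl')$. The key classical input (Bourbaki, \emph{Groupes et algèbres de Lie}, Ch.~V, §3.3) is that for any $v$ in the closure of the dominant chamber, $\Stab_{W_0}(v)$ is generated by the reflections $s_\alpha$ with $\la \alpha, v\ra = 0$. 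Applied to $v = \cl+\cl'$: for $\alpha \in \Phi^+$, the vanishing $\la \alpha, \cl+\cl'\ra = 0$ combined with $\cl, \cl' \in \cLa^+$ (so both summands are nonnegative) forces $\la \alpha, \cl\ra = \la \alpha, \cl'\ra = 0$. Hence each generating reflection of $\Stab_{W_0}(\cl+\cl')$ fixes $\cl$ and $\cl'$ individually, so $u$ fixes both. Therefore $\cmu_1 = w_1^{-1}(\cl) = w_1^{-1}u(\cl) = w_2^{-1}(\cl) = \cmu_2$, and similarly $\cmu_1' = \cmu_2'$, completing the proof.
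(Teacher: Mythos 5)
Your proof is correct, but it reaches bijectivity by a different route than the paper. The paper also gets well-definedness from Lemma \ref{lem:orbit} and notes surjectivity is clear, but then it concludes by a counting argument: using the fact that $\Stab(\cl)$ is generated by the reflections $s_{\calpha}$ with $\la \alpha,\cl\ra=0$ it deduces $\Stab(\cl+\cl')=\Stab(\cl)\cap\Stab(\cl')$, computes $|\cO(\cl+\cl')|=|W_0|/|\Stab(\cl)\cap\Stab(\cl')|$, and matches this with the cardinality of the source by counting, for each fixed $\cmu\in\cO(\cl)$, the elements $\cmu'\sim\cmu$ via the observation that the closed chambers containing $\cl$ are exactly the $u\cLa^+$ for $u\in\Stab(\cl)$. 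You instead prove injectivity directly: transporting a fiber to the dominant chamber, observing that $u:=w_1w_2^{-1}$ stabilizes $\cl+\cl'$, and using the same Bourbaki input (stabilizers of points in the closed dominant chamber are generated by the reflections fixing them, plus the nonnegativity argument forcing $\la\alpha,\cl\ra=\la\alpha,\cl'\ra=0$ whenever $\la\alpha,\cl+\cl'\ra=0$) to conclude $u$ fixes $\cl$ and $\cl'$ separately. So the key classical ingredient is identical in both arguments; your version is somewhat more direct in that it dispenses with the enumeration of chambers through $\cl$ and the cardinality comparison, whereas the paper's version produces the explicit orbit counts as a by-product. One small point to keep in mind: the relation $\cmu\sim\cmu'$ must be read as lying in a common \emph{closed} chamber (a $W_0$-translate of $\cLa^+$), as the paper makes explicit in the proof of Lemma \ref{lem:fact}; your argument uses exactly this reading, so it is consistent.
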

\begin{proof}
The map is well defined by Lemma \ref{lem:orbit} and is clearly surjective. We compare cardinalities to conclude that it is bijective. More precisely, since ${\Stab}(\cl) \subset W_0$ is generated by all $s_{\calpha}$ with $\la \alpha, \cl\ra =0$, it follows that $\Stab(\cl+\cl') = \Stab(\cl) \cap \Stab(\cl')$ and hence
$$|\cO(\cl+\cl')| = |W_0|/|\Stab(\cl) \cap \Stab(\cl')|.$$ 
On the other hand, the cardinality of $\cO(\cl)$ is  $|W_0|/|\Stab(\cl)|$. Moreover, having fixed $\cmu\in \cO(\cl)$, we have 
$\vert\{\cmu'\in \cO(\cl')\,\vert\, \cmu'\sim \cmu\}\vert=
\vert\{\cmu'\in \cO(\cl')\,\vert\, \cmu'\sim \cl\}\vert=
 \vert \Stab(\cl)\vert/\vert \Stab(\cl) \cap \Stab(\cl')\vert$ since the distinct Weyl chambers containing $\cl$ are all the $u\cLa^+$ for $u\in \Stab(\cl)$. The result follows.
\end{proof}

\subsection{Case $\cX = \cLa$}

We first prove Theorem \ref{thm:main} in the adjoint case when $\cX=\cLa$. 

\begin{lemma}\label{lem:fact}
For $\cl, \cl' \in \cLa^+$, we have
$$z_{\cO(\cl)} z_{\cO(\cl')} - z_{\cO(\cl+\cl')} \in F^{\ell(\cl+\cl')-1} \r[\cLa]^{W_0}$$
where $z_{\cO(\cl)} \in \r[\cLa]^{W_0}$ is defined by \eqref{zl1}.
\end{lemma}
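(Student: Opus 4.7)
The plan is to expand the product directly and split it into a ``good'' part that recovers $z_{\cO(\cl+\cl')}$ and a ``remainder'' that we control by length. Concretely, I would write
\[
z_{\cO(\cl)}\, z_{\cO(\cl')} \;=\; \sum_{(\cmu,\cmu')\,\in\, \cO(\cl)\times\cO(\cl')} e^{\cmu+\cmu'}
\]
and partition the index set according to whether $\cmu\sim\cmu'$ (in a common Weyl chamber) or not. By Lemma \ref{lem:orbit2}, the pairs with $\cmu\sim\cmu'$ biject with $\cO(\cl+\cl')$ via $(\cmu,\cmu')\mapsto \cmu+\cmu'$, so the corresponding subsum is exactly $z_{\cO(\cl+\cl')}$. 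Hence
\[
z_{\cO(\cl)}\, z_{\cO(\cl')} - z_{\cO(\cl+\cl')} \;=\; \sum_{\substack{(\cmu,\cmu')\\ \cmu\,\not\sim\,\cmu'}} e^{\cmu+\cmu'},
\]
and everything reduces to bounding $\ell(\cmu+\cmu')$ on the right.

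The key step is then to apply the strict convexity property \eqref{trigX}. For any pair $(\cmu,\cmu')\in \cO(\cl)\times \cO(\cl')$, $W_0$-invariance of $\ell$ gives $\ell(\cmu)+\ell(\cmu')=\ell(\cl)+\ell(\cl')$, and the fact that $\cl,\cl'\in \cLa^+$ both lie in the closed dominant chamber (so $\la\alpha,\cl\ra,\la\alpha,\cl'\ra\ge 0$ for all $\alpha\in\Phi^+$) yields $\ell(\cl+\cl')=\ell(\cl)+\ell(\cl')$. The crucial observation is that the equality case of \eqref{trigX} --- the pairings $\la\alpha,\cmu\ra$ and $\la\alpha,\cmu'\ra$ sharing the same sign for every $\alpha\in\Phi^+$ --- is exactly the condition that $\cmu$ and $\cmu'$ lie in a common closed Weyl chamber, i.e.\ that $\cmu\sim\cmu'$. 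Therefore for the pairs appearing in our remainder sum the inequality is strict, and $\ell(\cmu+\cmu')<\ell(\cl+\cl')$, so each $e^{\cmu+\cmu'}$ lies in $F^{\ell(\cl+\cl')-1}\r[\cLa]$.

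Finally, both $z_{\cO(\cl)} z_{\cO(\cl')}$ and $z_{\cO(\cl+\cl')}$ are $W_0$-invariant, so their difference lies in $F^{\ell(\cl+\cl')-1}\r[\cLa]\cap \r[\cLa]^{W_0} = F^{\ell(\cl+\cl')-1}\r[\cLa]^{W_0}$, as claimed. The only delicate point --- and really the main obstacle to watch for --- is confirming that the relation $\sim$ used in Lemma \ref{lem:orbit2} matches precisely the equality case of strict convexity of $\ell$; once this identification is pinned down, the rest of the argument is a direct bookkeeping of terms.
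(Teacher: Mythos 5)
Your proof is correct and follows essentially the same route as the paper's: expand the product, split off the $\cmu\sim\cmu'$ pairs via Lemma \ref{lem:orbit2} to recover $z_{\cO(\cl+\cl')}$, and bound the remaining terms using the strict-convexity equality case of \eqref{trigX} together with $W_0$-invariance and additivity of $\ell$ on a common chamber. The ``delicate point'' you flag --- that the equality case of \eqref{trigX} matches the relation $\sim$ --- is exactly the fact the paper uses (stated there tersely as $\ell(\cmu+\cmu')<\ell(\cmu)+\ell(\cmu')$ when $\cmu\not\sim\cmu'$), and your identification of it is correct.
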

 \begin{proof}
Recall that for $\cmu \in \cO(\cl)$ and $\cmu' \in \cO(\cl')$ we write $\cmu \sim \cmu'$ if they lie in a common Weyl chamber, namely a common $W_0$-conjugate of $\cLa^+$. We then have 
\begin{align*} 
z_{\cO(\cl)}z_{\cO(\cl')}
&= \left(\sum_{\mu \in \cO(\cl)}e^{\cmu} \right) \left( \sum_{\cmu' \in \cO(\l')} e^{\cmu'} \right) 
= \sum_{\cmu \sim \cmu'}  e^{\cmu+\cmu'} + \sum_{\cmu \not \sim \cmu'} e^{\cmu+\cmu'}\ .
\end{align*}
Lemma \ref{lem:orbit2} shows that
$$ z_{\cO(\cl)}z_{\cO(\cl')} = z_{\cO(\cl+\cl')} + \sum_{\cmu \not \sim \cmu'} e^{\cmu +\cmu'}.$$
Since $\ell(\cmu+\cmu')=\ell(\cmu)+\ell(\cmu')$ if $\cmu \sim \cmu'$ and $\ell(\cmu+\cmu')<\ell(\cmu)+\ell(\cmu')$ otherwise, we see that the right hand terms above belong to $F^{\ell(\cl+\cl')-1} \r[\cLa]^{W_0}$.
\end{proof}

\begin{proposition} \label{prop:lam} 
There exists an isomorphism of $\N$-filtered $\r$-algebras 
\begin{align} \label{map:poly}
\phi_{\cLa}: \r[\cLa^+] & \longrightarrow  \r[\cLa]^{W_0} \cr 
e^{\comega_\alpha} & \longmapsto z_{\cO(\comega_\alpha)} \quad\quad\quad \text{  for $\alpha \in \Pi$.} 
\end{align}
\end{proposition}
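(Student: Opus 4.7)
The plan is to use the strategy advertised in Remark~\ref{rem:filtered}: equip both sides with an $\N$-filtration, show that $\phi_{\cLa}$ respects the filtrations, and check that the induced map on associated gradeds is an isomorphism. The technical engine is Lemma~\ref{lem:fact}, which says the ``leading term'' of $z_{\cO(\cl)} z_{\cO(\cl')}$ is $z_{\cO(\cl+\cl')}$.

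First, I would check that $\phi_{\cLa}$ is well defined. By \eqref{L+}, $\cLa^+$ is the free commutative monoid on the fundamental coweights $\{\comega_\alpha\}_{\alpha\in \Pi}$, so $\r[\cLa^+]$ is the polynomial $\r$-algebra $\r[e^{\comega_\alpha}\,:\,\alpha\in\Pi]$ and the prescribed values on the generators uniquely extend to an $\r$-algebra homomorphism $\phi_{\cLa}$. Next, endow $\r[\cLa^+]$ with the $\N$-filtration $F^i\r[\cLa^+]=\Span_\r\{e^\cl:\ell(\cl)\leq i\}$; because every $\cl\in\cLa^+$ satisfies $\la\alpha,\cl\ra\geq 0$ for all $\alpha\in\Phi^+$, the equality case of \eqref{trigX} gives $\ell(\cl+\cl')=\ell(\cl)+\ell(\cl')$ for all $\cl,\cl'\in\cLa^+$, so this is indeed a filtered algebra. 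On the other side, $\r[\cLa]^{W_0}$ inherits the filtration \eqref{fil} by intersection, and since $\ell$ is $W_0$-invariant the basis $\{z_{\cO(\cl)}\}_{\cl\in \cLa^+}$ is adapted to this filtration, with $z_{\cO(\cl)}$ of filtration degree exactly $\ell(\cl)$.

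The core step is to establish, for every $\cl\in\cLa^+$, the ``leading-term formula''
\[
\phi_{\cLa}(e^\cl)\;\equiv\; z_{\cO(\cl)}\pmod{F^{\ell(\cl)-1}\r[\cLa]^{W_0}}.
\]
I would prove this by induction on the height $\sum_\alpha a_\alpha$ of $\cl=\sum_\alpha a_\alpha \comega_\alpha\in\cLa^+$. The base case is immediate from the definition of $\phi_{\cLa}$. For the inductive step, write $\cl=\cl_1+\cl_2$ with nonzero $\cl_1,\cl_2\in\cLa^+$ of smaller height, so $\phi_{\cLa}(e^{\cl_i})=z_{\cO(\cl_i)}+r_i$ with $r_i\in F^{\ell(\cl_i)-1}\r[\cLa]^{W_0}$. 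Expanding the product and applying Lemma~\ref{lem:fact} to the leading piece $z_{\cO(\cl_1)}z_{\cO(\cl_2)}$, all error terms land in $F^{\ell(\cl_1)+\ell(\cl_2)-1}=F^{\ell(\cl)-1}$ by the additivity of $\ell$ on $\cLa^+$ together with the fact that the filtration is multiplicative. This confirms both that $\phi_\cLa$ is filtration-preserving and that it has the stated leading term.

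Finally, I would invoke Remark~\ref{rem:filtered}. The associated graded $\gr\r[\cLa^+]$ has $\r$-basis $\{e^\cl\}_{\cl\in\cLa^+}$ with $e^\cl$ in degree $\ell(\cl)$, while $\gr\r[\cLa]^{W_0}$ has $\r$-basis $\{z_{\cO(\cl)}\}_{\cl\in\cLa^+}$ with $z_{\cO(\cl)}$ in the same degree. The leading-term formula says $\gr\phi_{\cLa}$ sends $e^\cl\mapsto z_{\cO(\cl)}$, a bijection on bases, so $\gr\phi_{\cLa}$ is an $\r$-module isomorphism in each degree; Remark~\ref{rem:filtered} then upgrades this to the statement that $\phi_{\cLa}$ itself is an isomorphism of filtered $\r$-algebras. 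The only real content is the inductive leading-term formula, and the only delicate point there is keeping track that additivity of $\ell$ on $\cLa^+$ is exactly what synchronizes the filtration degrees on the two sides; I do not anticipate any genuine obstacle beyond this bookkeeping.
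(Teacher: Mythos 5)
Your proposal is correct and follows essentially the same route as the paper: define $\phi_{\cLa}$ on the free generators $\{e^{\comega_\alpha}\}_{\alpha\in\Pi}$, use Lemma~\ref{lem:fact} to show $\phi_{\cLa}(e^{\cla})\equiv z_{\cO(\cla)}$ modulo $F^{\ell(\cla)-1}$, and conclude via Lemma~\ref{lem:orbit} and Remark~\ref{rem:filtered} that $\gr\phi_{\cLa}$, hence $\phi_{\cLa}$, is an isomorphism. The only difference is that you make explicit (by induction on height) the iteration of Lemma~\ref{lem:fact} that the paper leaves implicit in the product $\prod_\alpha (z_{\cO(\comega_\alpha)})^{n_\alpha}$, which is fine.
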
 
\begin{proof}
Since $\r[\cLa^+]$ is freely generated by  $\{e^{\comega_\alpha}\}_{\alpha\in \Pi}$ (see  \eqref{L+}) the map \eqref{map:poly} is well defined. 


On the other hand, for any $\cla=\sum_{\alpha\in \Pi} n_\alpha \comega_\alpha \in \cLa^+$ with $n_\alpha\in \Z_+$, we have 
\begin{equation}\label{eq:prod}
\phi_{\cLa}(e^\cla) - z_{\cO(\cla)} = \prod_{\alpha} (z_{\cO(\comega_\alpha)})^{n_\alpha} - z_{\cO(\cla)} \in F^{\ell(\cla)-1} \r[\cLa]^{W_0}
\end{equation}
where the rightmost inclusion is by Lemma \ref{lem:fact}. It follows that $\phi_{\cLa}$ preserves the filtration and that its associated graded is
\begin{align}
\notag \gr \phi_\cLa: \gr \r[\cLa^+] & \to \gr \r[\cLa]^{W_0} \\
\label{eq:phi} e^\cla & \mapsto  z_{\cO(\cla)} \ .
\end{align}
By Lemma \ref{lem:orbit}, the set $\{z_{\cO(\cla)}\}_{\cla \in \cLa^+}$ is a $\r$-basis for $\r[\cLa]^{W_0}$ which implies that $\gr \phi_\cLa$ is a bijection. It follows that $\phi_{\cLa}$ is an isomorphism (cf. Remark \ref{rem:filtered}). 
\end{proof} 

\begin{lemma}\label{lem:lam}
The composition of $\phi_{\cLa}$ with the projection to $\r[\cLa/\cQ]$ is given by 
\begin{align}
\notag \r[\cLa^+] & \rightarrow \r[\cLa/\cQ] \\
\label{eq:lam} e^{\cla} & \mapsto  \prod_{\alpha \in \Pi} |\cO(\comega_\alpha)|^{n_\alpha} [e^{\cla}]
\end{align}
for any $\cla = \sum_{\alpha\in \Pi} n_\alpha \comega_\alpha \in \cLa^+$. 
\end{lemma}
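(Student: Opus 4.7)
The plan is to unwind $\phi_{\cLa}$ on a generator $e^\cla$ via its multiplicativity and then observe that all elements in a single $W_0$-orbit collapse to the same class in $\r[\cLa/\cQ]$.

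First, I would recall from the construction that $\phi_{\cLa}$ is an $\r$-algebra homomorphism sending $e^{\comega_\alpha} \mapsto z_{\cO(\comega_\alpha)}$. Hence for $\cla = \sum_{\alpha \in \Pi} n_\alpha \comega_\alpha$,
\[
\phi_{\cLa}(e^\cla) = \prod_{\alpha \in \Pi} z_{\cO(\comega_\alpha)}^{n_\alpha} = \prod_{\alpha \in \Pi}\Bigl(\sum_{\cmu \in \cO(\comega_\alpha)} e^\cmu\Bigr)^{n_\alpha}.
\]

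Next, as noted in the proof of Lemma \ref{lem:proj}, for any $w \in W_0$ and $\cx \in \cX$ one has $\cx - w(\cx) \in \cQ$. Consequently, every $\cmu \in \cO(\comega_\alpha)$ satisfies $[e^{\cmu}] = [e^{\comega_\alpha}]$ in $\r[\cLa/\cQ]$, so the image of $z_{\cO(\comega_\alpha)}$ under the projection $\r[\cLa] \to \r[\cLa/\cQ]$ equals $|\cO(\comega_\alpha)|\,[e^{\comega_\alpha}]$.

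Putting these together, since the projection is also a ring homomorphism, the composition sends $e^\cla$ to
\[
\prod_{\alpha \in \Pi} \bigl(|\cO(\comega_\alpha)|\,[e^{\comega_\alpha}]\bigr)^{n_\alpha} = \Bigl(\prod_{\alpha \in \Pi} |\cO(\comega_\alpha)|^{n_\alpha}\Bigr)\, [e^\cla],
\]
which is exactly the formula \eqref{eq:lam}. I do not anticipate any real obstacle here: the statement is essentially a formal consequence of the fact that $\phi_{\cLa}$ is multiplicative and that $W_0$ acts trivially on $\cLa/\cQ$.
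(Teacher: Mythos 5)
Your proposal is correct and follows essentially the same route as the paper: the key point in both is that $W_0$ acts trivially on $\cLa/\cQ$ (equivalently $\cx - w(\cx) \in \cQ$), so $z_{\cO(\comega_\alpha)}$ projects to $|\cO(\comega_\alpha)|\,[e^{\comega_\alpha}]$, and the formula follows from multiplicativity of $\phi_{\cLa}$ and of the projection. No gaps.
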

\begin{proof}
The action of $W_0$ on $\cLa/\cQ$ is trivial (this is clear since $s_\alpha \cla = \cla - \la \cla, \alpha \ra \calpha$). It follows that $[w \cdot \comega_\alpha] = [\comega_\alpha] \in \cLa/\cQ$ for any $w \in W_0$ and hence $e^{\comega_\alpha} \mapsto [z_{\cO(\comega_\alpha)}] = |\cO(\comega_\alpha)| [e^{\comega_\alpha}]$.  The result follows since the map is a homomorphism.
\end{proof}

\subsection{Proof of Theorem \ref{thm:main}}\label{sec:proof}

Consider the injective map 
\begin{align}\label{embed'}
 \: \:\cX  &\longrightarrow p(\cX) \oplus \cLa = \cY \\
 \notag \cx & \longmapsto (p(\cx), q(\cx)) 
\end{align} which is well defined because $q(\cx) \in \cLa$ by Lemma \ref{lem:proj},  and which  maps $\cx\in \cQ$  onto $(0,\cx)$ because each coroot is $W_0$-conjugate to its opposite. Consider the following diagram 
\begin{equation}\label{diag1}
\xymatrix{
\r[\cX^+] \ar@{^{(}->}[d]  \ar[drrrr] \ar@{-->}[rrrr]^{\phi_\cX} &&&& \r[\cX]^{W_0} \ar@{^{(}->}[d]  \\
\r[\cY^+] \ar[rrrr]^{\phi_{\cY}} &&&& \r[\cY]^{W_0}  } 
\end{equation} 
where  the vertical arrows  are induced by \eqref{embed'},
$$\phi_{\cY} = \id_{\r[p(\cX)]} \otimes \phi_{\cLa}:  \r[\cY^+] \longrightarrow  \r[\cY]^{W_0} = \r[p(\cX)] \otimes_\r \r[\cLa]^{W_0}$$
and the diagonal is the composition of the left and bottom arrows. 

First we explain why the diagonal map factors through some dotted map $\phi_\cX$. Equivalently, we show that the composition 
$$\r[\cX^+] \hookrightarrow \r[\cY^+] \xrightarrow{\phi_{\cY}} \r[\cY]^{W_0} \hookrightarrow \r[\cY] \to  
 \r[\cY]/\r[\cQ]$$
has image in $\r[\cX]/\r[\cQ]$.  This is because by Lemma \ref{lem:lam},  the composition above  maps  $\cx\in \cX^+$ to a multiple of $e^{p(x)}\otimes [e^{q(x)}]\in \r[p(\cX)\oplus \cLa/\cQ]\cong \r[\cY]/\r[\cQ]$ and this element lies in the subspace $\r[\cX]/\r[\cQ]$ of $\r[\cY]/\r[\cQ]$.

Finally, note that there is also a well defined length function $\ell$ on $\cY$ which restricts to zero on $p(\cX) \subset \cY$. The restriction of this length function from $\cY$ to $\cX$ is the length function on $\cX$. This implies that the diagonal map in \eqref{diag1} preserves the filtration induced by $\ell$. Using that $\gr \phi_\cLa$ is given by \eqref{eq:phi}, the associated graded of this diagonal map is
\begin{align*}
\gr \r[\cX^+] & \to \gr \r[\cY]^{W_0} \\
e^{\cx} & \mapsto e^{p(\cx)}\otimes z_{\cO(q(\cx))} \ .
\end{align*}
By Lemma \ref{lem:orbit} this map is injective  with image $\gr \r[\cX]^{W_0}$. It follows that 
$$\gr \phi_\cX: \gr \r[\cX^+] \to \gr \r[\cX]^{W_0}$$
is an isomorphism and thus so is $\phi_{\cX}$ (cf. Remark \ref{rem:filtered}). Applying the Rees construction induces an isomorphism
$$\Rees (\phi_\cX) : \r[\q][\cX^+] \to \r[\cC(\cX)]^{W_0}$$
which yields \eqref{eq:main1}. 

\begin{remark}
If we quotient \eqref{eq:main1} from Theorem \ref{thm:main} by $\q=1$ then we get an isomorphism of $\r$-algebras $\r[\cX]^{W_0} \cong \r[\cX^+]$.
\end{remark}

\section{Structure of $\cH_\q$ over its center}\label{sec:frob}

We assume in this section that $\r$ is a Noetherian ring. It is proved in \cite[Thm. 1.3]{VigII} that $\cH_\q$ is finitely generated as a module over its center $\cZ_\q$ which is a finitely generated $\r[\q]$-algebra.

 As a consequence of \cite[Cor. 5.7]{CO1} and Corollary \ref{coro:main} (see also \cite[Cor. 3.13]{CO1}) we have the following.

\begin{corollary}\label{cor:rigid} Assume that $\r$ is a regular, finitely generated $k$-algebra where $k$ is a field.
If $X/Q$ is free then $\cH_\q$ is  finitely generated, projective over $\cZ_\q$ and the $\cZ_\q$-rigid dualizing complex of $\cH_\q$ is
${\bfR} _{\cH_\q/\cZ_\q}=(\upiota) \cH_\q$.
\end{corollary}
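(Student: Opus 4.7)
The plan is to reduce the statement to \cite[Cor. 5.7]{CO1} by feeding in the explicit description of the center furnished by Corollary \ref{coro:main}. That corollary identifies $\cZ_\q \cong \r[\q][\cX]^{W_0}$, so that $\cH_\q$ sits over the invariant ring of a ``nice'' ambient algebra. First I would observe that since $\r$ is a regular, finitely generated $k$-algebra, the Laurent polynomial extension $\r[\q][\cX]$ is again regular, Noetherian, and finitely generated over $k$; moreover $\cH_\q$ is finitely generated over $\cZ_\q$ by \cite[Thm. 1.3]{VigII}, so all the ring-theoretic hypotheses needed to even talk about a rigid dualizing complex over $\cZ_\q$ are in place.

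Next I would verify that the freeness hypothesis ``$X/Q$ free'' plays its intended role. Under this assumption the Pittie--Steinberg theorem \cite[Thm. 1.1]{Stein2} ensures that $\r[\q][\cX]$ is \emph{free} of finite rank over the invariant subring $\r[\q][\cX]^{W_0}$. Combined with Corollary \ref{coro:main}, this freeness is precisely the input required to deduce that $\cH_\q$ is projective (and not merely finite) over $\cZ_\q$: one passes through $\cA_\q \cong \r[\cC(\cX)]$ (Lemma \ref{lem:A}), which is free over $\cZ_\q \cong \r[\cC(\cX)]^{W_0}$, and then uses the known $\cA_\q$-module structure of $\cH_\q$.

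Finally, with projectivity established, I would invoke \cite[Cor. 5.7]{CO1} directly to produce the rigid dualizing complex in the required shape $\bfR_{\cH_\q/\cZ_\q} = (\upiota)\cH_\q$, where $\upiota$ is the Nakayama automorphism constructed in \loccit The main (and essentially only) obstacle is a careful verification that the setup of \cite[Cor. 5.7]{CO1} applies verbatim to $(\cH_\q, \cZ_\q)$: the base ring there must be identified with our $\cZ_\q$ via Corollary \ref{coro:main}, and the Frobenius/Nakayama data transported across this identification. Once this bookkeeping is done, the statement follows by citation, so I do not expect genuine new difficulties beyond what was already handled in Sections \ref{sec:rootsystem}--\ref{sec:proof}.
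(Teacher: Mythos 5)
Your proposal matches the paper's argument: the paper proves this corollary purely by citation, combining \cite[Cor. 5.7]{CO1} (see also \cite[Cor. 3.13]{CO1}) with the identification $\cZ_\q \cong \r[\q][\cX]^{W_0} \cong \r[\q][\cX^+]$ from Corollary \ref{coro:main}, exactly as you do. Your additional remarks on Pittie--Steinberg and freeness of $\cA_\q$ over $\cZ_\q$ are just an unpacking of what the hypothesis ``$X/Q$ free'' accomplishes inside the cited result, so no genuinely different route is taken.
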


Here $\upiota$ is an involution on $\cH_\q$ which can be described as follows. First recall  that $\cH_\q$ is equipped with a $\r[\q]$-algebra involution $\iota$ (see \cite[Prop. 2.3]{CO1}) satisfying
\begin{equation}\label{f:invo}
 \iota(T_\omega)= T_\omega \text{ if $\ell(\omega)=0$}\quad\text{ and }\quad \iota(T_s-1)=-(T_s+\q^2) \text{ for $s\in \Saff$ } .
 \end{equation}
 Then $\upiota:= \iota \circ j$ where $j$ is an involution of $\cH_\q$ defined on its basis by $j:T_{w}\mapsto \epsilon(w) T_{w}$ for $w\in W$ where $\epsilon$ is a certain orientation character (cf. \cite[Remark 2.13]{CO1}). To be precise, the extended Weyl group $W$ is the semi-direct product of the normal subgroup $W_\aff$ generated by $\{s_A\}_{A \in \Pi_\aff}$ and the subgroup $\Omega$ of all elements of length zero. Then $\epsilon:W  \rightarrow W/\Waff\cong \Omega \rightarrow \{\pm 1\}$ where the second map is the signature of $\Omega$ acting on $\{s_A\}_{A \in \Pi_\aff}$. 


Finally, as a consequence of \cite[Cor. 4.10]{CO1} and Corollary \ref{coro:main}, we obtain the following.
\begin{corollary}\label{cor:frob}
If $X/Q$ is free and $\r = \k$ then $\cH_\q$ is a free Frobenius extension over its center with Nakayama automorphism $\upiota$. 
\end{corollary}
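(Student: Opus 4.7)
The plan is to apply \cite[Cor. 4.10]{CO1} with the identification of $\cZ_\q$ provided by Corollary \ref{coro:main}; the result is essentially a specialization of a general theorem from \cite{CO1}, and the work consists in checking that its hypotheses are met in the present setup.

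First I would use Corollary \ref{coro:main} to identify $\cZ_\q \cong \k[\q][\cX^+]$. Under the hypothesis that $X/Q$ is free, the short exact sequence $0 \to Q \to X \to X/Q \to 0$ splits and dualizes to $\cX \cong \cLa \oplus (X/Q)^\vee$; since the dominance condition only involves the $\cLa$ factor, this descends to a monoid splitting $\cX^+ \cong \cLa^+ \oplus (X/Q)^\vee$. Consequently $\cZ_\q$ is isomorphic to a polynomial ring (in $\q$ and the fundamental coweights of $\cLa$) tensored with a Laurent polynomial ring, which in particular is a regular, finitely generated $\k$-algebra.

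With the center in hand, specializing Corollary \ref{cor:rigid} to $\r = \k$ shows that $\cH_\q$ is finitely generated and projective over $\cZ_\q$ with rigid dualizing complex $(\upiota)\cH_\q$. Projectivity upgrades to freeness over this polynomial-tensor-Laurent ring, for instance by Quillen--Suslin combined with Swan's theorem for Laurent polynomials, or alternatively by coupling the Bernstein presentation of $\cH_\q$ over $\cA_\q$ with a Pittie--Steinberg type freeness of $\cA_\q$ over $\cA_\q^{W_0}$ in the $X/Q$ free case. Then \cite[Cor. 4.10]{CO1} packages this data---a free, finite module structure over a regular center together with the prescribed rigid dualizing complex---into the assertion that $\cH_\q$ is a free Frobenius extension over $\cZ_\q$ with Nakayama automorphism $\upiota$. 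The delicate point is matching the Nakayama automorphism with $\upiota$ itself rather than some twist of it: this identification is dictated by the explicit form $(\upiota)\cH_\q$ of the rigid dualizing complex from Corollary \ref{cor:rigid}, which is the reason why Corollary \ref{cor:frob} cannot be separated from the rigid dualizing complex computation that produced Corollary \ref{cor:rigid}.
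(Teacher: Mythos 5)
Your proposal matches the paper's argument: Corollary \ref{cor:frob} is stated there as an immediate consequence of \cite[Cor. 4.10]{CO1} combined with the identification $\cZ_\q \cong \k[\q][\cX^+]$ from Corollary \ref{coro:main}, with no further proof given. The additional verifications you sketch (splitting $\cX^+ \cong \cLa^+ \oplus (X/Q)^\vee$, regularity of the center, upgrading projectivity to freeness) are details the paper delegates to \cite{CO1}, so your route is essentially the same.
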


\section{Toric geometry}\label{sec:toric}

In this section we take $\r = \Q$ and restrict ourselves to the case when the root datum corresponds to a group of adjoint type, meaning that $X = Q$ and $\cX = \cLa$. We denote by $\Sigma$ the fan in $\cX_\R$ associated to the Weyl chambers. Namely, for each subset $\Delta \subset \Phi$ we consider the cones $\{v \in \cX_\R: \la v, \alpha \ra \ge 0, \forall \alpha \in \Delta\}$. We let $X_\Sigma$ denote the corresponding toric variety. This is a smooth, proper toric variety that has been studied in various contexts in the literature. 

For example, when the root system corresponds to a semi-simple Lie algebra, it plays a central role in the description of the minimal wonderful compactification \cite{CP}. On the other hand, by work of Losev and Manin \cite{LM}, when the root system is of type $A_{n-1}$ the variety $X_\Sigma$ can be interpreted as the fine moduli space $\barL_n$ of stable, $n$-pointed chains of projective lines. Moreover, there exists a natural surjective birational morphism $\barM_{0,n+2} \to \barL_n$ from the Grothendieck-Knudsen moduli space and subsequently one can interpret $\barL_n$ as a moduli space of weighted pointed stable curves as constructed by Hassett \cite{Ha} (where the weight is $(1,1,\frac1n,\dots\frac1n)$). This interpretation of $X_\Sigma$ as a moduli space was further studied by Batyrev and Blume \cite{BB1,BB2} where, among other things, they give analogous moduli theoretic descriptions of $X_\Sigma$ in other types. 

\subsection{Equivariant cohomology}

The action of the Weyl group $W_0$ on the cohomology of $X_\Sigma$ was studied by Procesi \cite{Pr}, Dolgachev and Lunts \cite{DL} and Stembridge \cite{Stem}.  The cohomology of $X_\Sigma$ can be described in terms of the Stanley-Reisner ring associated to $\Sigma$. We review this description and use it to relate this cohomology to the ring $\cA_0 = \cA_\q/ \q\cA_\q$ (Proposition \ref{prop:coh1}). Note that $\cA_0$ is the associated graded with respect to the filtration on $\Q[\cX]$ induced by $\ell$. 
Via the identification $e^x\mapsto E_x \bmod \q \cA_\q$ (see \eqref{E}), we see $\{e^x\}_{x\in\cX}$  as a basis for $\cA_0$.

Let $G(\Sigma) = \{v_1, \dots, v_n\}$ denote the generators of the rays (one dimensional cones) of $\Sigma$. A subset $\{v_{i_1}, \dots, v_{i_p}\} \subset G(\Sigma)$ is called primitive if it is not the set of generators of a $p$-dimensional cone in $\Sigma$ but every $k$-subset with $k < p$ does generate a $k$-dimensional cone in $\Sigma$. 

A continuous function $\varphi: \cX_\R \to \R$ is called $\Sigma$-piecewise linear if its restriction to every cone is linear. It is easy to see that we have a natural isomorphism 
$$PL(\Sigma) \to \R^n \ \ \ \varphi \mapsto (\varphi(v_1), \dots, \varphi(v_n))$$
where $PL(\Sigma)$ is the space of $\Sigma$-piecewise linear functions. A function $\varphi \in PL(\Sigma)$ is (upper) convex if 
$$\varphi(x_1 + x_2) \le \varphi(x_1) + \varphi(x_2)$$
for $x_1,x_2 \in \cX_\R$. It is called strictly convex if the inequality above is strict whenever $x_1,x_2$ do not belong to a common cone. 

\begin{remark}\label{rem:convex}
Following the discussion in Section \ref{sec:length} the length function $\ell: \cX_\R \to \R$ is piecewise linear, strictly convex. 
\end{remark}

\begin{remark}
There is a natural inclusion $X_\R \to PL(\Sigma)$ corresponding to globally linear functions. The quotient $PL(\Sigma)/X_\R$ can be identified with $H^2(X_\Sigma,\R)$. Under this identification the image of convex $\Sigma$-piecewise linear functions is the closed K\"ahler cone of $X_\Sigma$.  This motivates the relevance of convexity geometrically.  
\end{remark}

\begin{lemma}\label{lem:convex1}
If $\{x_1,\dots,x_k\} \subset \cX$ then, inside $\cA_0$, we have
$$e^{x_1} \dots e^{x_k} = 
\begin{cases}
e^{x_1+\dots+x_k} & \text{ if $x_1,\dots,x_k$ belong to a cone in $\Sigma$ } \\
0 & \text{ otherwise }
\end{cases}$$
\end{lemma}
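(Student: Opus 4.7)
The plan is to reduce everything to an iterated version of property (iii) from Section \ref{subsec:Hecke}, then compare length additivity with the combinatorics of $\Sigma$.

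First, I would prove by induction on $k$ (using property (iii) for the base and step) that
\[
E_{x_1} \cdots E_{x_k} = \q^{\sum_{i=1}^k \ell(x_i) - \ell(x_1 + \cdots + x_k)} E_{x_1 + \cdots + x_k}
\]
holds in $\cH_\q$. By the subadditivity of $\ell$ (see \eqref{trigX}), the exponent of $\q$ on the right is a nonnegative integer. Under the quotient map $\cA_\q \to \cA_0 = \cA_\q / \q \cA_\q$, and the identification $e^x = E_x \bmod \q \cA_\q$, this formula yields
\[
e^{x_1} \cdots e^{x_k} = \begin{cases} e^{x_1 + \cdots + x_k} & \text{if } \sum_{i=1}^k \ell(x_i) = \ell(x_1 + \cdots + x_k), \\ 0 & \text{otherwise.} \end{cases}
\]

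The second step is to identify the length-additivity condition with the geometric condition that $x_1, \dots, x_k$ lie in a common cone of $\Sigma$. Using the explicit description $\ell(x) = \sum_{\alpha \in \Phi^+} |\langle \alpha, x\rangle|$ from \eqref{eq:ell}, the equality $\sum_i \ell(x_i) = \ell(\sum_i x_i)$ is equivalent, via the triangle inequality applied termwise, to the condition that for every $\alpha \in \Phi^+$ the scalars $\langle \alpha, x_1 \rangle, \dots, \langle \alpha, x_k \rangle$ all share a common (weak) sign. On the other hand, by the definition of $\Sigma$ given at the start of Section \ref{sec:toric}, the cones of $\Sigma$ are precisely the loci cut out in $\cX_\R$ by choosing, for each $\alpha \in \Phi^+$, one of the closed conditions $\langle \alpha, v \rangle \ge 0$ or $\langle \alpha, v \rangle \le 0$. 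Thus $x_1, \dots, x_k$ lie in a common cone of $\Sigma$ if and only if the pairings against each positive root share a common sign, which matches the length-additivity condition exactly.

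Combining these two steps closes the proof. Neither step presents a real obstacle: the first is a routine iteration of the already-established relation (iii), and the second is an immediate consequence of the formula for $\ell$ together with the construction of $\Sigma$ as the Weyl-chamber fan. The only mildly delicate point is keeping track of the weak inequalities (so that points on shared walls, for which some $\langle \alpha, x_i \rangle = 0$, are correctly allowed into a common cone), but this is automatic from the characterization above.
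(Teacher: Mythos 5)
Your proof is correct and follows essentially the same route as the paper's: the paper's one-line argument invokes exactly relation (iii), the identification of $\cA_0$ with $\cA_\q/\q\cA_\q$ (the associated graded of $\Q[\cX]$ for the $\ell$-filtration), and the $\Sigma$-piecewise linearity and strict convexity of $\ell$, which is precisely your root-by-root sign comparison. The only cosmetic caveat is that not every cone of $\Sigma$ arises from a one-sided sign choice (cones inside root hyperplanes need equalities), but the implication you actually use --- shared weak signs of $\la \alpha, x_i\ra$ for all $\alpha\in\Phi^+$ forces a common cone --- is the correct one and matches the paper's Remark \ref{rem:convex}.
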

\begin{proof}
This follows from the fact that $\ell$ is $\Sigma$-piecewise linear and strictly convex (Remark \ref{rem:convex}) and that $\cA_0$ is the associated graded with respect to the filtration on $\Q[\cX]$ induced by $\ell$. See also (iii) before Lemma \ref{lem:A}).
\end{proof}

Let $\Q[\uz] = \Q[z_1,\dots,z_n]$ and denote by $\SR(\Sigma) \subset \Q[\uz]$ the ideal generated by all monomials $z_{i_1} \dots z_{i_p}$ where $\{v_{i_1},\dots,v_{i_p}\}$ is primitive.  This is the Stanley-Reisner ideal. Then the equivariant cohomology ring $H^*_T(X_\Sigma,\Q)$ can be identified with $\Q[\uz]/\SR(\Sigma)$ (see \cite{BCP}). 

\begin{proposition}\label{prop:coh1}
There exists an isomorphism $H^*_T(X_\Sigma,\Q) \cong \cA_0$.
\end{proposition}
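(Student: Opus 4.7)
The plan is to construct an explicit ring isomorphism $\psi: \Q[\uz]/\SR(\Sigma) \to \cA_0$ sending each $z_i$ to $e^{v_i}$, and to verify it is a bijection by matching natural bases on the two sides.

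First, I would define the candidate ring map $\psi_0: \Q[\uz] \to \cA_0$ by $z_i \mapsto e^{v_i}$ and check that it descends through $\SR(\Sigma)$. If $\{v_{i_1}, \ldots, v_{i_p}\}$ is a primitive subset, then these rays cannot lie in a single common cone of $\Sigma$: otherwise the cone they span would be a face of that containing cone, hence a cone of $\Sigma$, contradicting primitivity. Lemma \ref{lem:convex1} then immediately gives $e^{v_{i_1}} \cdots e^{v_{i_p}} = 0$ in $\cA_0$, so $\psi_0$ factors through a ring map $\psi: \Q[\uz]/\SR(\Sigma) \to \cA_0$.

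Next, I would verify that $\psi$ is a bijection by matching bases. On the $\cA_0$-side, $\{e^\cx\}_{\cx \in \cX}$ is a basis, as noted above Lemma \ref{lem:convex1}. On the Stanley-Reisner side, a basis is given by the face-supported monomials $\prod_j z_{i_j}^{a_j}$ with $a_j > 0$ and $\{v_{i_j}\}$ the ray set of some cone of $\Sigma$. In the adjoint case the rays of $\Sigma$ are precisely the $W_0$-orbit of the fundamental coweights $\{\comega_\alpha\}_{\alpha \in \Pi}$; since these form a $\Z$-basis of $\cLa$, each maximal cone (a Weyl chamber) is simplicial and generated by a $\Z$-basis of $\cX = \cLa$, which is exactly smoothness of $X_\Sigma$. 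Consequently every $\cx \in \cX$ has a unique expression $\cx = \sum_j a_j v_{i_j}$ where $\{v_{i_j}\}$ are the rays of the unique cone of $\Sigma$ containing $\cx$ in its relative interior and $a_j \in \Z_{>0}$; conversely each face-supported monomial gives such a $\cx$. Applying Lemma \ref{lem:convex1} inside the common cone shows $\psi(\prod_j z_{i_j}^{a_j}) = e^\cx$, so $\psi$ implements this bijection of bases and is therefore an isomorphism.

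The only substantive ingredient is the unique decomposition of lattice points inside their minimal cone, which is exactly smoothness of $X_\Sigma$ and follows from the classical fact that $\{\comega_\alpha\}_{\alpha \in \Pi}$ is a $\Z$-basis of $\cLa$ (as used in Section \ref{sec:rootsystem}). I expect this to be the main technical point, although it is essentially routine once the identification of ray generators is made explicit; the Stanley-Reisner step reduces transparently to Lemma \ref{lem:convex1}.
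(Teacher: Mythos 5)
Your proposal is correct and follows essentially the same route as the paper: the same map $z_i \mapsto e^{v_i}$, factored through the Stanley--Reisner ideal via Lemma \ref{lem:convex1}, with bijectivity obtained by matching the basis $\{e^\cx\}_{\cx\in\cX}$ of $\cA_0$ with the face-supported monomial basis of $\Q[\uz]/\SR(\Sigma)$ (the paper phrases this via an explicit section $\tau'$ and two surjectivity checks, which amounts to the same basis comparison). Your explicit justification of the unique decomposition of each lattice point in its minimal cone, via smoothness of the Weyl-chamber fan, is a detail the paper leaves implicit but is the same underlying fact.
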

\begin{proof}
Consider the map of algebras 
\begin{align*}
\tau: \Q[\uz] & \to \cA_0  \\
z_i & \mapsto e^{v_i}
\end{align*}
By Lemma \ref{lem:convex1} this map factors through $\Q[\uz]/\SR(\Sigma)$. 
Now take some $x \in \cX$ and consider the minimal cone $\sigma \in \Sigma$ containing it. Write $x = \sum_{j=1}^k a_j v_{i_j}$ where $a_j \in \N$ and $\{v_{i_1}, \dots, v_{i_k}\}$ are the generators of $\sigma$. Consider the map of vector spaces 
\begin{align*}
\tau': \cA_0 & \to \Q[\uz]/\SR(\Sigma) \\
e^x & \mapsto z_{i_1}^{a_1} \dots z_{i_k}^{a_k}
\end{align*}
Notice that 
$$\tau \circ \tau'(e^x) = \tau(z_{i_1}^{a_1} \dots z_{i_k}^{a_k}) = e^{a_1 v_{i_1}} \dots e^{a_k v_{i_k}} = e^{\sum_j a_j v_{i_j}} = e^x$$
where the second equality follows from Lemma \ref{lem:convex1}. This proves that $\tau$ is onto. On the other hand, $\Q[\uz]/\SR(\Sigma)$ has a basis given by monomial $z_{i_1}^{a_1} \dots z_{i_k}^{a_k}$ where $v_{i_1}, \dots, v_{i_k}$ belong to a common cone. It follows that $\tau'$ is also onto which means $\tau$ is injective. 
\end{proof}

\subsection{Quantum cohomology}

We now explain how to extend Proposition \ref{prop:coh1} to an isomorphism involving $\cA_\q$. To do this most naturally first recall that, by Lemma \ref{lem:rees}, $\cA_\q = \r[\cC(\cX)]$ is the Rees of $\r[\cX]$ for the filtration induced by the length function $\ell$. The key property used in this identification is that $\ell$ is a $\Sigma$-piecewise linear, convex (the triangle inequality) function. Thus, given an arbitrary convex function $\varphi \in PL(\Sigma)$ we can consider the analogous $\r$-algebra $\cA^\varphi_\q = \Rees^\varphi \r[\cX]$. In this notation $\cA_\q = \cA_\q^\ell$. One can also define the cone 
$$\cC^\varphi(\cX) = \{ (\cx,k) \in \cX \oplus \Z: k \ge \varphi(\cx) \}$$
so that $\cA_\q^\varphi \cong \Q[\cC^\varphi(\cX)]$ where, as before, $\q$ corresponds to $e^{(0,1)}$. 

\begin{lemma}\label{lem:convex2}
If $\{w_1,\dots,w_k\} \subset \cX$ then, inside $A^\varphi_\q$, we have
$$e^{w_1} \dots e^{w_k} = \q^{\sum_j \varphi(w_j) - \varphi(\sum_j w_j)} e^{w_1+\dots+w_k}$$
\end{lemma}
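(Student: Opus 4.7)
The plan is to unpack the definition of $\cA_\q^\varphi \cong \Q[\cC^\varphi(\cX)]$ and identify exactly what ``$e^w$'' means for $w \in \cX$, then perform a one-line computation in the semigroup algebra. By analogy with Lemma \ref{lem:A} and equation \eqref{E}, where $E_\cx \in \cH_\q$ corresponds under the isomorphism $\cA_\q \cong \Q[\cC(\cX)]$ to the ``bottom of the cone'' generator $e^{(\cx,\ell(\cx))}$, the convention in $\cA_\q^\varphi \cong \Q[\cC^\varphi(\cX)]$ must be that $e^w$ stands for $e^{(w,\varphi(w))}$. Similarly $\q$ corresponds to $e^{(0,1)}$ and multiplication in the semigroup algebra is simply $e^{(\cx_1,k_1)}\cdot e^{(\cx_2,k_2)} = e^{(\cx_1+\cx_2,\,k_1+k_2)}$. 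Note also that $(w,\varphi(w)) \in \cC^\varphi(\cX)$ trivially, so $e^w$ is well defined as an element of $\cA_\q^\varphi$.

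With this in hand, the computation is direct:
$$
e^{w_1}\cdots e^{w_k} \;=\; e^{(w_1,\varphi(w_1))}\cdots e^{(w_k,\varphi(w_k))} \;=\; e^{\bigl(\sum_j w_j,\;\sum_j\varphi(w_j)\bigr)},
$$
while on the other side,
$$
\q^{\sum_j\varphi(w_j)-\varphi(\sum_j w_j)}\, e^{\sum_j w_j}
\;=\; e^{\bigl(0,\;\sum_j\varphi(w_j)-\varphi(\sum_j w_j)\bigr)} \cdot e^{\bigl(\sum_j w_j,\;\varphi(\sum_j w_j)\bigr)}
\;=\; e^{\bigl(\sum_j w_j,\;\sum_j\varphi(w_j)\bigr)}.
$$
The two sides agree.

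The only point requiring a remark is that the exponent of $\q$ on the right-hand side is a non-negative integer, so the formula makes sense as written. This is exactly the (upper) convexity of $\varphi$ applied iteratively: $\varphi(\sum_j w_j) \le \sum_j \varphi(w_j)$. There is no genuine obstacle here — the lemma is essentially a book-keeping identity tracking how the Rees parameter $\q$ absorbs the convexity defect of $\varphi$, and it specializes (modulo $\q$) to Lemma \ref{lem:convex1} since for $\varphi = \ell$ the exponent vanishes precisely when the $w_j$ lie in a common cone.
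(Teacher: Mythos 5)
Your proof is correct and is essentially the paper's argument: the paper proves this by the same bookkeeping in the Rees algebra $\Rees^\varphi\,\r[\cX]$, which it explicitly identifies with the semigroup algebra $\Q[\cC^\varphi(\cX)]$ you compute in, with $e^w$ corresponding to the minimal lift $e^{(w,\varphi(w))}$ and $\q = e^{(0,1)}$. Your remark that convexity of $\varphi$ makes the $\q$-exponent nonnegative, and that the $\varphi=\ell$ case recovers Lemma \ref{lem:convex1}, matches the paper's intent exactly.
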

\begin{proof}
The proof is the same as that of Lemma \ref{lem:convex1} using that $\cA^\varphi_\q$ is the Rees algebra with respect to the filtration induced by $\varphi$. 
\end{proof}

To define the quantum cohomology $QH^*_{T,\varphi}(X_\Sigma,\Q)$ let $\Q[\uz,\q] = \Q[z_1,\dots,z_n,\q]$ and denote by $\SR_\q^\varphi(\Sigma) \subset \Q[\uz,\q]$ the quantum Stanley-Reisner ideal generated by 
\begin{equation}\label{eq:RSq}
\q^{\sum_{i=1}^n b_i \varphi(v_i) - \sum_{j=1}^k a_j \varphi(v_{i_j})} \prod_{i=1}^k z_{i_j}^{a_j} - \prod_{i=1}^n z_i^{b_i}
\end{equation}
over all possible $a_j,b_i \in \Z_{\ge 0}$ such that 
$\sum_{j=1}^k a_j v_{i_j} = \sum_{j=1}^n b_i v_i$
and $\{v_{i_1}, \dots, v_{i_k}\}$ belong to the same cone. The ring $QH^*_{T,\varphi}(X_\Sigma,\Q)$ is then the quotient $\Q[\uz,\q]/\SR_\q^\varphi(\Sigma)$. As the following Lemma explains, this ring is a $\q$-integral (equivariant) version of the quantum cohomology ring defined in \cite[Def. 5.1]{Ba}.

\begin{lemma}\label{lem:batyrev}
Specializing $\q^{\cdot} \mapsto \exp(-(\cdot))$ in $\SR^\varphi_\q(\Sigma)$ recovers the ideal $Q_\alpha(\Sigma)$ from \cite[Def. 5.1]{Ba}.
\end{lemma}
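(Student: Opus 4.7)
The plan is to apply the specialization $\q^{c} \mapsto \exp(-c)$ directly to the generators \eqref{eq:RSq} of $\SR^\varphi_\q(\Sigma)$ and check that the resulting polynomials are exactly the generators of Batyrev's ideal $Q_\alpha(\Sigma)$ after identifying his K\"ahler class $\alpha$ with $\varphi$. The only nontrivial ingredient is that $\varphi$ is linear on each cone of $\Sigma$.

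First I would simplify the exponent appearing in \eqref{eq:RSq}. By hypothesis $v_{i_1},\dots,v_{i_k}$ lie in a common cone of $\Sigma$ and $\varphi$ is linear on that cone, so
\begin{equation*}
\sum_{j=1}^k a_j \varphi(v_{i_j}) = \varphi\!\Bigl(\sum_{j=1}^k a_j v_{i_j}\Bigr) = \varphi\!\Bigl(\sum_{i=1}^n b_i v_i\Bigr),
\end{equation*}
where the second equality uses the linear relation $\sum_j a_j v_{i_j} = \sum_i b_i v_i$. After the specialization, the generator \eqref{eq:RSq} becomes
\begin{equation*}
\exp\!\Bigl(\varphi\bigl(\textstyle\sum_i b_i v_i\bigr) - \sum_i b_i \varphi(v_i)\Bigr) \prod_{j=1}^k z_{i_j}^{a_j} - \prod_{i=1}^n z_i^{b_i}.
\end{equation*}
This is precisely the form of Batyrev's generators in \cite[Def.~5.1]{Ba}: those generators are parameterized by integer linear relations $\sum c_i v_i = 0$ one of whose parts is supported in a common cone of $\Sigma$, and setting $a_j := c_{i_j} \ge 0$ on the common-cone side and $b_i := -c_i \ge 0$ on the other side recovers exactly the above expression.

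For the reverse inclusion I would check that every integer relation $\sum c_i v_i = 0$ used by Batyrev produces a pair of nonnegative vectors $(a_j),(b_i)$ satisfying the hypotheses of \eqref{eq:RSq}, so that each of his generators is the specialization of one of ours. The main difficulty is purely notational: aligning the sign conventions between our rule $\q^{c}\mapsto\exp(-c)$ and Batyrev's $\exp(-\alpha(\cdot))$, and verifying that the condition ``$\{v_{i_j}\}$ lie in a common cone'' built into our definition matches the analogous hypothesis in \cite[Def.~5.1]{Ba}. Once these conventions are matched, the two ideals agree generator by generator, and no argument deeper than the linearity used above is required.
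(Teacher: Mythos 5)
Your easy direction is fine: specializing \eqref{eq:RSq} and using that $\varphi$ is linear on the cone containing $v_{i_1},\dots,v_{i_k}$ does show that each specialized generator of $\SR^\varphi_\q(\Sigma)$ lies in $Q_\alpha(\Sigma)$. The gap is in the reverse inclusion, and it comes from your description of Batyrev's generating set. You assert that the generators of $Q_\alpha(\Sigma)$ in \cite[Def. 5.1]{Ba} are indexed by integer relations $\sum_i c_i v_i = 0$ ``one of whose parts is supported in a common cone of $\Sigma$''. That cone condition is not part of Batyrev's definition: his binomials are attached to arbitrary relations among the $v_i$ (equivalently, arbitrary pairs of monomials with the same image in $\cX$), with no requirement that the positive or negative part be supported on a single cone --- indeed, if the cone condition were built into \cite[Def. 5.1]{Ba}, the identification would be a tautological matching of generators and there would be nothing to prove. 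Splitting a general relation into nonnegative parts $(a_j),(b_i)$ therefore does not produce data satisfying the hypotheses of \eqref{eq:RSq}, since the requirement that $\{v_{i_1},\dots,v_{i_k}\}$ lie in a common cone can simply fail; so ``each of his generators is the specialization of one of ours'' is false generator by generator, and the difficulty is not merely notational.

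To close the gap you need an actual argument that every Batyrev binomial lies in the specialized ideal. The paper does this by quoting \cite[Thm. 9.5]{Ba}: $Q_\alpha(\Sigma)$ is already generated by the one-sided relations $\exp(-\sum_i b_i\varphi(v_i)) - \prod_i z_i^{b_i}$ with $\sum_i b_i v_i = 0$ and $b_i \ge 0$, and these are exactly the specializations of \eqref{eq:RSq} in which all $a_j$ vanish (the empty collection vacuously lies in a cone). Alternatively, you could argue directly: since $\Sigma$ is complete and smooth, any monomial $\prod_i z_i^{b_i}$ is congruent, modulo the specialized ideal, to $\exp\bigl(\varphi(\sum_i b_i v_i) - \sum_i b_i\varphi(v_i)\bigr)$ times the cone-supported monomial representing the lattice point $\sum_i b_i v_i$; applying this to both sides of an arbitrary Batyrev binomial and checking that the exponential prefactors agree shows it lies in $\SR^\varphi_{\exp(-)}(\Sigma)$. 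Either route supplies the missing step; as written, your proof assumes it.
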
 
\begin{proof}
The ideal $\SR^\varphi_{\exp(-)}(\Sigma)$ is clearly contained in the ideal  $Q_\alpha(\Sigma)$ defined in \cite[Defn. 5.1]{Ba}. On the other hand, by \cite[Thm. 9.5]{Ba}, the ideal $Q_\alpha(\Sigma)$ is equivalently generated by 
$$\exp{(-\sum_{i=1}^n b_i \varphi(v_i))} - \prod_{i=1}^n z_i^{b_i}$$
where $\sum_{i=1}^n b_i v_i = 0$ with $b_i \in \Z_{\ge 0}$. Such relations are special cases of those from \eqref{eq:RSq} where all $a_i$ are zero. Thus we also have $Q_\alpha(\Sigma) \subset \SR^\varphi_{\exp(-)}(\Sigma)$.
\end{proof}

\begin{proposition}\label{prop:coh2}
There exists an isomorphism $QH^*_{T,\varphi}(X_\Sigma,\Q) \cong A_\q^\varphi$.
\end{proposition}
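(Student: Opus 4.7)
The plan is to mimic the proof of Proposition \ref{prop:coh1}, using Lemma \ref{lem:convex2} in place of Lemma \ref{lem:convex1} to track the powers of $\q$.

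First, define the $\Q$-algebra homomorphism $\tau: \Q[\uz,\q] \to \cA_\q^\varphi$ by $z_i \mapsto e^{v_i}$ and $\q \mapsto \q$. To see that $\tau$ kills the quantum Stanley-Reisner ideal $\SR_\q^\varphi(\Sigma)$, take a generator as in \eqref{eq:RSq} and set $\cx := \sum_j a_j v_{i_j} = \sum_i b_i v_i$. Since $\{v_{i_1},\dots,v_{i_k}\}$ lie in a common cone and $\varphi$ is $\Sigma$-piecewise linear, $\sum_j a_j \varphi(v_{i_j}) = \varphi(\cx)$. Applying Lemma \ref{lem:convex2} twice then gives
\[
\tau\!\left(\q^{\sum_i b_i \varphi(v_i) - \sum_j a_j \varphi(v_{i_j})} \prod_j z_{i_j}^{a_j}\right)
= \q^{\sum_i b_i \varphi(v_i) - \varphi(\cx)} e^{\cx}
= \tau\!\left(\prod_i z_i^{b_i}\right).
\]
Hence $\tau$ descends to a $\Q$-algebra map $\bar\tau: QH^*_{T,\varphi}(X_\Sigma,\Q) \to \cA_\q^\varphi$.

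Next I construct a $\Q$-linear inverse. Because $X_\Sigma$ is smooth the fan $\Sigma$ is simplicial and each cone is generated by part of a $\Z$-basis of $\cX$; consequently every $\cx \in \cX$ lies in a unique minimal cone $\sigma(\cx)\in\Sigma$ with generators $v_{i_1},\dots,v_{i_p}$, and admits a unique expansion $\cx = \sum_j a_j v_{i_j}$ with $a_j \in \Z_{\ge 0}$. Define
\[
\tau': \cA_\q^\varphi \longrightarrow QH^*_{T,\varphi}(X_\Sigma,\Q), \qquad
e^{(\cx,k)} \longmapsto \q^{\,k-\varphi(\cx)}\, z_{i_1}^{a_1}\cdots z_{i_p}^{a_p},
\]
where $k-\varphi(\cx)\in \Z_{\ge 0}$ because $(\cx,k)\in\cC^\varphi(\cX)$ and $\varphi$ takes integer values on $\cX$. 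A direct computation using Lemma \ref{lem:convex2} and the equality $\sum_j a_j \varphi(v_{i_j}) = \varphi(\cx)$ shows $\bar\tau\circ\tau'(e^{(\cx,k)}) = e^{(\cx,k)}$, so $\bar\tau$ is surjective and $\tau'$ is injective.

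To finish, it suffices to show that the image of $\tau'$ spans $QH^*_{T,\varphi}(X_\Sigma,\Q)$, for then $\tau'$ is also surjective and hence an isomorphism. Given any monomial $\q^m z_1^{c_1}\cdots z_n^{c_n}$, put $\cx := \sum_i c_i v_i$, let $v_{i_1},\dots,v_{i_p}$ generate the minimal cone containing $\cx$, and write $\cx = \sum_j a_j v_{i_j}$. The relation \eqref{eq:RSq} applies (its $\q$-exponent $\sum_i c_i\varphi(v_i)-\sum_j a_j \varphi(v_{i_j})$ is nonnegative by convexity of $\varphi$ together with $\sum_j a_j\varphi(v_{i_j})=\varphi(\cx)$), and yields
\[
\q^m z_1^{c_1}\cdots z_n^{c_n} = \q^{\,m + \sum_i c_i \varphi(v_i) - \varphi(\cx)}\, z_{i_1}^{a_1}\cdots z_{i_p}^{a_p}
\]
in the quotient, which lies in the image of $\tau'$. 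The main (and really the only) obstacle is organizing this normal-form reduction so that all $\q$-exponents stay nonnegative; this is precisely the content of convexity of $\varphi$, and is exactly the property built into the definition of $\cC^\varphi(\cX)$.
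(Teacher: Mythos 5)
Your proposal is correct and follows essentially the same route as the paper: the same algebra map $\tau_\q$ (your $z_i\mapsto e^{v_i}$ agrees with the paper's $z_i\mapsto \q^{\varphi(v_i)}e^{v_i}$ once the notation for the minimal lift $e^{(v_i,\varphi(v_i))}$ is fixed), killed on $\SR^\varphi_\q(\Sigma)$ via Lemma \ref{lem:convex2}, with the same linear section $\tau'_\q$ and the same conclusion from $\bar\tau\circ\tau'=\id$ plus surjectivity of $\tau'$. The only difference is that you spell out the normal-form reduction showing monomials supported on cones span the quotient, which the paper leaves implicit.
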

\begin{proof}
The proof is similar to that of Proposition \ref{prop:coh1}. Consider the map of algebras
\begin{align*}
\tau_\q: \Q[\uz,\q] & \to A_\q^\varphi  \\
z_i & \mapsto \q^{\varphi(v_i)} e^{v_i}
\end{align*}
By Lemma \ref{lem:convex2} this map factors through $\Q[\uz,\q]/\SR^\varphi_\q(\Sigma)$. 

For $w \in \cX$ we write it as $w = \sum_{j=1}^k a_j v_{i_j}$ where $a_j \in \N$ and $\{v_{i_1}, \dots, v_{i_k}\}$ belong to the same cone. Consider the map of vector spaces 
\begin{align*}
\tau_\q': \cA_\q^\varphi & \to \Q[\uz]/\SR_\q^\varphi(\Sigma) \\
\q^k e^w & \mapsto \q^{k-\varphi(w)} z_{i_1}^{a_1} \dots z_{i_k}^{a_k}
\end{align*}
As before $\tau_\q \circ \tau'_\q$ is the identity map which implies that $\tau_\q$ is onto. On the other hand, using relations \eqref{eq:RSq}, $\Q[\uz]/\SR_\q^\varphi(\Sigma)$ has a basis given by monomial $\q^k z_{i_1}^{a_1} \dots z_{i_k}^{a_k}$ where $v_{i_1}, \dots, v_{i_k}$ belong to a common cone and $k \ge 0$ . It follows that $\tau_\q'$ is onto which means $\tau_\q$ is injective. 
\end{proof}

\begin{remark} 
It is noted in \cite{Ba} that in the limit $\varphi \to \infty$ the quantum cohomology ring recovers the usual cohomology ring. In light of Propositions \ref{prop:coh1} and \ref{prop:coh2} this corresponds to recovering $\cA_0$ from $\cA_\q$ be specializing $\q \mapsto 0$. 
\end{remark}

\end{document}